\documentclass[twoside,leqno]{article}
\usepackage[letterpaper]{geometry}
\usepackage{siamproceedings}

\usepackage{xcolor}

\newcommand{\iso}{\xrightarrow{\simeq}}
\usepackage[T1]{fontenc}
\usepackage{amsfonts}
\usepackage{amssymb,amsmath,epsfig,amscd,stmaryrd}
\usepackage{graphicx}
\usepackage{epstopdf}
\usepackage{enumitem}
\usepackage{algorithmic}
\ifpdf
  \DeclareGraphicsExtensions{.eps,.pdf,.png,.jpg}
\else
  \DeclareGraphicsExtensions{.eps}
\fi

\newsiamremark{remark}{Remark}
\newsiamremark{example}{Example}
\newsiamremark{hypothesis}{Hypothesis}
\crefname{hypothesis}{Hypothesis}{Hypotheses}
\newsiamthm{claim}{Claim}
\usepackage{amsopn}

\renewcommand\mathbb{\mathbf}
\renewcommand\setminus{\smallsetminus}
\def\Omegacirc{\Omega^{\mathrm{circ}}}
\def\Omegalune{\Omega^{\mathrm{lune}}}
\def\mv{\mu_{\mathrm{Haar}}}
\def\walpha{\widetilde{\alpha}}
\def\wbeta{\widetilde{\beta}}
\def\wgamma{\widetilde{\gamma}}
\def\NwithoutzeroA{\mathbf{N}_{> 0}}
\def\N{\mathbf{N}}
\def\T{\mathbf{T}}
\def\R{\mathbf{R}}
\def\Z{\mathbf{Z}}
\def\Q{\mathbf{Q}}
\def\C{\mathbf{C}}

\def\D{\mathbf{D}}
\def\Db{\overline{\mathbf{D}}}
\def\bb{\mathbf{b}}
\def\be{\mathbf{e}}
\def\bP{\mathbf{P}}
\def\cV{\mathcal{V}}
\def\cO{\mathcal{O}}
\def\Gm{\mathbb{G}_m}
\def\WP{\mathcal{W}^{+}}

\def\rank{\mathrm{rank} \,}
\def\ardeg{\widehat{\deg} \,}

\def\ovE{\overline{E}}
\def\cL{\mathcal{L}}
\def\ovcL{\overline{\mathcal{L}}}
\def\Qb{\overline{\Q}}
\def\mueff{\mu_{\mathrm{eff}}}

\def\mueffKu{\mu_{\mathrm{eff},K,u}}
\def\mueffKv{\mu_{\mathrm{eff},K,v}}
\def\P{\mathbb{P}}

\def\Gal{\mathrm{Gal}}
\def\PGL{\mathrm{PGL}}
\def\hM{\mathcal{M}}

\def\MT{\mathcal{MT}}
\def\PP{\mathcal{P}}

\def\BdR{B^{+}_{\mathrm{dR}}}

\def\LL{\mathcal{L}}
\def\hO{\mathcal{O}}

\DeclareMathOperator{\Spec}{Spec}

\newmuskip\pFqmuskip

\newcommand*\pFq[6][8]{
  \begingroup 
  \pFqmuskip=#1mu\relax
  \mathcode`\,=\string"8000
  \begingroup\lccode`\~=`\,
  \lowercase{\endgroup\let~}\pFqcomma
  {}_{#2}F_{#3}{\left[\genfrac..{0pt}{}{#4}{#5};#6\right]}%
  \endgroup
}
\newcommand{\pFqcomma}{\mskip\pFqmuskip}

\begin{document}

\newcommand\relatedversion{}

\title{\Large Arithmetic holonomy bounds and effective Diophantine approximation\thanks{2020 Mathematics Subject Classification. 11J68, 11J72}}
    \author{Frank Calegari\thanks{The University of Chicago,
5734 S University Ave,
Chicago, IL 60637, USA.
Supported in part by NSF Grants DMS-2001097 and DMS-2450123.  (\email{fcale@math.uchicago.edu})}, Vesselin Dimitrov\thanks{Department of Mathematics, 
 California Institute of Technology,
Pasadena, CA 91125, USA. 
 (\email{dimitrov@caltech.edu})}, and Yunqing Tang\thanks{Department of Mathematics, University of California, Berkeley, Evans Hall, Berkeley, CA 94720, USA. Supported in part by NSF Grant DMS-2231958 and a Sloan Research Fellowship.
  (\email{yunqing.tang@berkeley.edu}).}}

\date{}

\maketitle

\fancyfoot[R]{\scriptsize{Copyright \textcopyright\ 2026 by SIAM\\
Unauthorized reproduction of this article is prohibited}}

\begin{abstract}
In this paper, we explore several threads arising from our recent joint work on arithmetic holonomy bounds, which were originally devised  to  prove new irrationality results based on the method of Ap\'ery limits.
We propose a new method to address effective Diophantine approximation on the projective line and the multiplicative group. 
This method, and all our other results in the paper, emerged from quantifying our holonomy bounds in a way that directly yields effective measures
of irrationality and linear independence. Applying these to a dihedral algebraic construction, we derive good effective irrationality measures
for high order roots of an algebraic number, in an approach that might 
 be considered 
a multivalent continuation of the classical hypergeometric method
of Thue, Siegel, and Baker. A well-known Dirichlet approximation argument of Bombieri allows one to derive from this the classical effective Diophantine theorems, 
hitherto only approachable by Baker's linear forms in logarithms or by Bombieri's equivariant Thue--Siegel method. These include the algorithmic resolution of the two-variable
$S$-unit equation, the Thue--Mahler equation, and the hyperelliptic and superelliptic equations, as well as the Baker--Feldman effective power sharpening of Liouville's theorem. We also 
give some other applications, including 
irrationality measures for
 the classical $L(2,\chi_{-3})$ and the $2$-adic~$\zeta(5)$, and a new proof of the transcendence of~$\pi$. 
Due to space limitations, a full development of these ideas will be deferred to future work.\end{abstract}

\section{Effective Diophantine approximation on~$\Gm$.} \label{effectiveintro}
Denote by~$h : \Gm(\Qb) \to [0,\infty)$ the canonical logarithmic absolute Weil height on the multiplicative group of the field of algebraic numbers, 
and by $H := \exp(h)$ its exponentiated (``multiplicative'') version. We refer to Bombieri and Gubler's book~\cite[\S~1.5.7]{BombieriGubler} for the definitions and
basic properties of these notions, and in particular~\cite[\S~1.4.3]{BombieriGubler} for the normalization $|\cdot|_v$ we will be using of the absolute value in the place $v \in M_K$
of a number field~$K$. These are the normalizations (see also \cref{notation}) such
that, with  $\log^+(t) := \max\left(0, \log{|t|} \right)$
and $\alpha \in \Qb$ is arbitrary, we have
 $h(\alpha) = \sum_{v \in M_K} \log^+{|\alpha|_v}$ independently of a choice of number 
 field $K$ 
 containing~$\alpha$.

A landmark theorem in Diophantine analysis takes on the following minimalistic but structurally robust form. 

\begin{theorem}  \label{main Diophantine theorem}
Consider the following data: 
\begin{itemize}
\item a number field~$K$;
\item a place~$v$ of~$K$;  
\item a finitely generated subgroup $\Gamma < K^{\times}$  of the multiplicative group $\Gm(K) = K^{\times}$; 
\item a positive number $\varepsilon > 0$. 
\end{itemize}
From these data, one can extract an effectively computable function $C(K, v,  \Gamma, \varepsilon) \in \R$, for which
the following Diophantine boundedness property takes place: 
For every $A \in K^{\times}$, all solutions $\gamma \in \Gamma$ of the Diophantine inequality
\begin{equation}   \label{main Diophantine inequality}
| 1 - A \gamma |_v \leq H(\gamma)^{-\varepsilon}
\end{equation}
satisfy the effective height bound
\begin{equation}  \label{effective height bound}
h(\gamma)  \leq  C(K, v, \Gamma,\varepsilon) \left( 1 + h(A) \right). 
\end{equation}
\end{theorem}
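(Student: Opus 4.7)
The approach I would take is the one signaled in the abstract: reduce the multiplicative Diophantine inequality \eqref{main Diophantine inequality} to an effective irrationality measure for high-order radicals $\alpha^{1/N}$ via the Dirichlet pigeonhole device of Bombieri, and then invoke such a measure --- which is the main technical contribution of the paper, coming from an arithmetic holonomy bound applied to a dihedral algebraic construction --- to close the loop.

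\emph{Steps 1--2: Bombieri's reduction to a radical approximation.}
Fix generators $g_1,\ldots,g_r$ of $\Gamma$ modulo its torsion subgroup, and let $N$ be an integer parameter to be chosen at the end. Every $\gamma \in \Gamma$ factors as $\gamma = \eta^N \rho$ with $\eta = g_1^{q_1}\cdots g_r^{q_r}$ and $\rho = \zeta\cdot g_1^{r_1}\cdots g_r^{r_r}$, where $0\le r_i < N$ and $\zeta$ is torsion. The ``remainder'' satisfies $h(\rho) = O_{\Gamma,N}(1)$, while $h(\eta) = h(\gamma)/N + O_{\Gamma,N}(1)$. Substituting into \eqref{main Diophantine inequality} and setting $\alpha := (A\rho)^{-1}$, the factorization $1-(\eta/\alpha^{1/N})^N = \prod_{\omega^N=1}(1-\omega\,\eta/\alpha^{1/N})$ localizes the smallness to the nearest $N$-th root of $\alpha$: for some choice of $\alpha^{1/N}$,
\begin{equation*}
|\eta - \alpha^{1/N}|_v \;\le\; C_1(\Gamma,N,v)\, |\alpha|_v^{1/N}\, H(\gamma)^{-\varepsilon},
\end{equation*}
the remaining $N-1$ factors being bounded away from zero by an amount effectively computable from $N$ and $v$ (in the archimedean case by a bound of the form $(c/N)^{N-1}$; in the non-archimedean case by controlling $|1-\omega|_v$ through the residue characteristic of~$v$).

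\emph{Step 3: Apply the effective irrationality measure and conclude.}
The paper's arithmetic holonomy bound, applied to the Pad\'e-type series arising from a dihedral algebraic construction for $\alpha^{1/N}$, delivers a lower bound of the shape
\begin{equation*}
|\eta - \alpha^{1/N}|_v \;\ge\; c_0(N,v)\, H(\alpha)^{-c_1(N)}\, H(\eta)^{-\tau_N}
\end{equation*}
valid for all $\eta \in K^\times$, with effectively computable $c_0, c_1$, and with exponent $\tau_N$ satisfying $\tau_N/N \to 0$ as $N \to \infty$. Combined with the upper bound from Step 2 and the identity $h(\gamma) = N h(\eta) + O_{\Gamma,N}(1)$, this rearranges to
\begin{equation*}
H(\eta)^{\varepsilon N - \tau_N} \;\le\; C_2(\Gamma,N,v)\, H(\alpha)^{c_1(N) + 1/N}.
\end{equation*}
Choose, effectively in terms of $\varepsilon$ alone, an integer $N = N(\varepsilon)$ with $\varepsilon N > 2\tau_N$. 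Using $h(\alpha) = h(A\rho) \le h(A) + O_{\Gamma,N}(1)$, one obtains $h(\eta) \le C_3(K,v,\Gamma,\varepsilon)(1+h(A))$, and hence $h(\gamma) = N h(\eta) + O_{\Gamma,N}(1)$ satisfies \eqref{effective height bound}.

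\emph{Main obstacle.}
The crux is unquestionably Step 3: producing an effective irrationality measure for $\alpha^{1/N}$ whose exponent $\tau_N$ lies quantitatively below the trivial Liouville exponent $N$, uniformly as $\alpha$ varies in $K^\times$ and across a family of $N$ rich enough to force $\tau_N < \varepsilon N$ by the choice of a single $N = N(\varepsilon)$. All the arithmetic input of the paper --- the holonomy bound for $G$-functions, the dihedral hypergeometric Pad\'e construction (the promised ``multivalent continuation of the Thue--Siegel--Baker hypergeometric method''), and integrality control of the approximants' denominators at the $v$-adic place alongside archimedean growth estimates --- converges in this step. By contrast, the Bombieri pigeonhole reduction of Steps 1--2 is essentially bookkeeping; what makes the Diophantine theorem effective is the holonomic quality of the approximations to the radicals.
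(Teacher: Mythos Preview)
Your outline has the right architecture (Bombieri reduction $+$ effective irrationality measure for radicals), but there is a genuine gap in Step~3, and it propagates backwards to invalidate the choices in Steps~1--2.

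You assume an irrationality measure of the shape
\[
|\eta - \alpha^{1/N}|_v \;\ge\; c_0(N,v)\,H(\alpha)^{-c_1(N)}\,H(\eta)^{-\tau_N}
\]
with $\tau_N$ depending on~$N$ \emph{alone} and $\tau_N/N \to 0$. The paper's holonomy method does not produce this. What Propositions~3.4 and~3.5 actually yield is a bound of the form $\log|1-a^{1/r}\eta|_u \ge -\bigl(\log 2 + h(a^{1/r}) + h(\eta)\bigr)\,r/g(r/h(a))$, where $g(t)\to\infty$ only as $t\to\infty$ (``Waldschmidt uniformity''). For fixed $N$ and $h(\alpha)\asymp h(A)\to\infty$, the ratio $N/h(\alpha)\to 0$ and the exponent collapses back to Liouville. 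A uniform $\tau_N$ with merely polynomial $H(\alpha)$-penalty would already be essentially Baker's two-logarithm theorem---precisely what the paper is trying to reprove.

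Because of this, the paper does \emph{not} fix $N=N(\varepsilon)$: the radical degree~$r$ must be taken proportional to $h(A)$. And once $r$ grows with $h(A)$, your Euclidean-division decomposition $\gamma=\eta^N\rho$ with $0\le r_i<N$ is too crude: it only gives $h(\rho)=O_\Gamma(r)$, hence $h(a)=h(A\rho)\lesssim (1+c(\Gamma))r$, so $r/h(a)$ stays bounded and you never enter the sub-Liouville regime. The paper instead uses genuine Dirichlet simultaneous approximation (Lemma~4.1) to produce $\gamma=a_0\eta^r$ with $h(a_0)\le c_9(\Gamma)\,rQ^{-1/t}$; choosing $Q=Q(K,v,\Gamma,\varepsilon)$ large enough forces $r\ge c_8(K,v,\varepsilon)\,h(Aa_0)$, which is exactly the hypothesis needed for the holonomy-derived irrationality measure to beat $\varepsilon r$. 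In short: the missing idea is that the radical degree must scale with $h(A)$, and that scaling is what forces the upgrade from Euclidean division to Dirichlet's theorem.
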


One  goal of this paper is to outline a new  proof of~\cref{main Diophantine theorem}.
We begin, however,  with an abridged history.
\cref{main Diophantine theorem} was first proved by Alan Baker~\cite{BakerSharpeningII} in the Archimedean case 
(enhancing his original method for lower bounds on linear forms in logarithms in many variables through ideas of Stark and Feldman),
and Kunrui Yu~\cite{YuII} in the nonarchimedean case. 
The effectivity clause is the important one---it is the feature allowing for an algorithmic output of 
all solutions to many Diophantine equations and inequalities
(in the spirit of Hilbert's Tenth Problem).
In contrast, the mere existence of some $C(K,v,\Gamma,\varepsilon) \in \R$ follows  
easily from Roth's (or indeed from Siegel's) ineffective Diophantine exponent of an arbitrary algebraic number, as Gelfond observed based on Siegel's argument with collecting coset representatives under a high degree isogeny $[r] : \Gm \to \Gm$. (See, for example, \cite[Theorem 1.9]{WaldschmidtBook} for how the $A=1$ case follows ineffectively from Roth's theorem. The general argument for~$A \in K^{\times}$ is similar
and implicit in this reduction scheme, see also~\cref{the new proof} below.)

By a standard and easy argument (see~\cite[\S~5.4.3]{BombieriGubler}),  the special case~$A=1$ of~\cref{main Diophantine theorem} (formulated
as Theorem~5.4.1 in~\cite{BombieriGubler}) provides an effective height bound on the general $S$-unit equation $x+y = 1$ in two
variables, to be solved in the group of $S$-units $x,y \in \cO_{K,S}^{\times}$ in a number field~$K$, with $S \subset M_K$ signifying an arbitrary finite set of places
that contains all Archimedean places, namely:
\begin{equation}  \label{S-unit bound}
h(x) + h(y) \leq \log{16} + 2 \max_{v \in S} \left\{ C\left(K, v, \cO_{K,S}^{\times},  \frac{1}{2|S|} \right) \right\}. 
\end{equation}
In turn, by the classical arguments of Siegel and Mahler (see \cite[\S~5.3]{BombieriGubler} or~\cite[\S~9.6]{EvertseGyory}),
an effective height bound~\cref{S-unit bound} leads in theory to an algorithmic
resolution of the general Thue--Mahler $F(x,y) \in \cO_{K,S}^{\times}$ and superelliptic $y^m = f(x)$ equations in $S$-integers $x,y \in \cO_{K,S}$, and more broadly to compute the set of integral points on various other (although far from all) affine algebraic curves. Here, $F(x,y)
\in K[x,y]$ is an arbitrary homogeneous form and $f(x) \in K[x]$ is an arbitrary polynomial. 
Furthermore, the linear dependence in~\Cref{effective height bound} on the parameter~$h(A)$, which in the Archimedean case
is formulated and proved as Baker's Theorem~1.10 in~\cite[\S\S~1, 4, 11, E]{BugeaudBook}, also allows one (via the Thue equation, see~\cite[\S\S~4.3, 4.4]{BugeaudBook}) 
 to 
derive a form of the Baker--Feldman theorem~\cite{FeldmanL} on the effective improvement $\mueff(\alpha) \leq \deg{\alpha} - \epsilon\left( \Q(\alpha) \right)$
of Liouville's Diophantine exponent~$d = \deg{\alpha}$ for an arbitrary algebraic number of a degree~$d \geq 3$. 

 The best (known) dependence 
\begin{equation} \label{depend C}
C(K,v,\Gamma,\varepsilon) \ll_{[K:\Q], v, \varepsilon,t} \prod_{i=1}^{t} (1+h(\xi_i))
\end{equation}
 on the heights of a set of generators~$\xi_1, \ldots, \xi_t$ of~$\Gamma$ (apart from an absolute
numerical coefficient involved, which however is frequently quite important in practical applications) comes from Baker's theory of linear forms in logarithms in many variables, cf. \cite{BakerWustholzMain} or~\cite{Waldschmidt1993,WaldschmidtBook} in the Archimedean case, and~\cite{YupadicI,YupadicII,YupadicIII} in the nonarchimedean case. Note that
the ultimate Baker--W\"ustholz dependency in the form~\cref{depend C} 
eliminates
the special role of the coset~$A\Gamma$, and~$A = 1$ is no longer any loss of generality in
this formulation.
 The form~\cref{depend C} has no reason to have any finality; 
 the most optimistic  folklore conjecture closely related to $abc$ speculates that the product in~\cref{depend C} should get replaced by a sum.
For simplicity of the exposition, we shall hence stick to the minimalistic essential form of~\cref{main Diophantine theorem}, including the distinguished coset parameter~$A$
in the shape of Baker's {\it Sharpening~II}~\cite{BakerSharpeningII}, and we
abstain from pursuing here the refinement to the form~\cref{depend C}. 
A number of different and easier proofs~\cite{BiluBugeaud,BugeaudSunit}, \cite[\S~4.1]{BugeaudBook}, \cite[Corollary 10.18]{WaldschmidtBook} 
of ~\cref{main Diophantine theorem}
were obtained using
 Bombieri's geometry of numbers argument. 
 The most rudimentary form of Bombieri's idea
  \cite[Lemma~4.2]{BugeaudBook} goes back to an observation of Stark~\cite[page~262]{StarkDirichlet}
  which we will use ourselves in~\cref{the new proof}. 
  This argument only requires relatively simpler-to-prove bounds~\cite{Laurent1994,LMN,BugeaudLaurent} for two logarithms, and
  in particular it does not require the optimal (logarithmic) dependence on the variables of the linear form (or the exponents, as one frequently says). What they \emph{do} require,
  however, is that they  come with Waldschmidt's 
optimal range of uniformity in the arguments (inputs) of the logarithms.
(This feature was  first introduced in~\cite[\emph{parameter~$M$ on page~179}]{Waldschmidt1993}, see also~\cite[\emph{Theorem~9.1 (ii) with the modified parameter~$B$}]{WaldschmidtBook}.)

Once the reduction  to softer bounds on linear forms in two logarithms (given Waldschmidt's uniformity feature) had been made,
it opened up a second and more algebraic path to~\cref{main Diophantine theorem}  independent from the analytic  theory of linear forms in logarithms. 
This was carried out by Bombieri in the 1990s, grounded in a Galois equivariant form of the Thue--Siegel principle found by Bombieri, van der Poorten, and Vaaler~\cite{BombieriThueSurvey,BombieriVdPVaaler}.
A general and completely explicit bound~\cref{effective height bound}, which for large enough~$h(A)$ is also of the Baker--W\"ustholz form~\cref{depend C} (although with an inferior dependence on the implicit parameters~$[K:\Q], v, \varepsilon$, and $t = \rank(\Gamma)$), can be found in Bombieri~\cite{BombieriGm} in the Archimedean case, and Bombieri--Cohen~\cite{BombieriCohenII} and~\cite{BombieriCohenElements} in the nonarchimedean case. Bombieri's original work~\cite{BombieriGm} was
founded upon Viola's geometric form~\cite{ViolaDyson} of Dyson's nonvanishing lemma, whereupon the introduction of an extrapolation parameter  thereafter in~\cite{BombieriCohenII,BombieriCohenElements}  denoted ``$M$''
 led in~\cite{BombieriCohenII} (worked out in detail only in the nonarchimedean case)
to a logarithmically improved implicit coefficient in the final estimate of the form~\cref{depend C}. Finally in~\cite{BombieriCohenElements} the appeal to Viola's theorem was bypassed and replaced by an elementary Wronskian argument,  
and the final implicit coefficient in the form~\cref{depend C} (once again for $h(A) \gg_{[K:\Q],v,\varepsilon,t} 1$, and worked out there in the nonarchimedean case) was improved some further.

Ultimately, both the Baker--Waldschmidt and 
the Thue--Siegel (Bombieri) paths to~\cref{main Diophantine theorem}  
(the only known proofs)
remain  fairly complicated,  
and there is no easy path known to such a  theorem, even in the special case~$A=1$. 
This is despite
 the central role of~\cref{main Diophantine theorem}
in modern Diophantine analysis, and after several decades of intensive research (see \cite{BugeaudBook,Baker,BakerWustholzSurvey,FeldmanNesterenko} for a panorama of this subject, and to~\cite{Bombieri1999,BombieriFortyYears,CohenPerspectives} for an introduction to the paradigm's history and some speculation about its future in the light of the famous and elusive $abc$ conjecture).
In this paper, we give a new---and perhaps simpler---proof of~\cref{main Diophantine theorem} by applying our quantitative arithmetic holonomy bounds that we develop below.
Explicit bounds will be given in a future paper as we continue to explore the potential of the method we outline here. Two longstanding unsolved problems in the area are to give an effective height bound on the multivariable $S$-unit equation, and to effectivize Thue's theorem for general algebraic numbers besides those of the binomial form $\sqrt[r]{a/b}$.

\subsection{Notation.} \label{notation} 
As we already said in the introduction, all our conventions and notations for heights and absolute values are the ones found in Bombieri and Gubler's book~\cite{BombieriGubler}.  
We spell out, in particular, that $|\cdot|_v$ is normalized so that $|p|_v=p^{-\frac{[K_v:\Q_p]}{[K:\Q]}}$ if $v\mid p$, and $|x|_v=|x|_\infty^{\frac{[K_v:\R]}{[K:\Q]}}$ for $v\mid \infty$ and $|\cdot|_\infty$  the  Euclidean norm on $K_v\subset \C$. 
 We let $\D_v(R)$ and $\Db_v(R)$ denote the open and closed unit discs of radius~$R$ in $\C_v$, and~$\T_v(R) = \partial \Db_v(R) := \{|x|_v  = R\}$;
 we sometimes omit the subscript when~$v=\infty$, and we also abridge $\D_v := \D_v(1)$, $\Db_v := \Db_v(1)$, $\T_v := \T_v(1)$. 
More generally, we follow the notation of~\cite{L2chi}. We will use $c_0$, $c_1, \ldots$ to denote a sequence of effective constants readily computable 
in a constructive function of a datum indicated within (possibly empty) brackets. 
Similarly, the Vinogradov symbols~$\ll$, $\asymp$, and~$\gg$ used without subscripts will imply absolute effective constants, 
and in general implicit constants that are effectively and straightforwardly computable in terms of the data listed in the subscript. 
The semantical meaning of every occurrence of those symbols is that effective implicit conditions can be written down for which the given statement holds.

\section{Arithmetic holonomy bounds: a quantitative theory. 
}  \label{sec:quantitative}

\subsection{Holonomy bounds and irrationality.}\label{sec: irrational}
Let us recall the framework in~\cite[\S~1.2, \S~2]{L2chi} for proving the irrationality of certain
periods~$\eta$.
We start (typically) with $A(x)\in \Z\llbracket x \rrbracket$ and~$B(x) \in \Q\llbracket x \rrbracket$ such that $B(x)-\eta A(x)$  has larger convergence radius than both $A(x)$ and~$B(x)$ as power series in $\R\llbracket x \rrbracket$.
By analogy with~\cite{Apery,BeukersG}, we call~$\eta$ an Ap\'{e}ry limit.
We pick a holomorphic mapping $\varphi: (\Db,0) \rightarrow (\C, 0)$ such that $\varphi^*(B(x)-\eta A(x))$ is meromorphic on $\D \subset \Db$. We then  bound the largest number of
 $\Q(x)$-linearly independent power series $f_1, \dots, f_m \in \Q \llbracket x \rrbracket$ such that the denominators of their coefficients 
 have the same shape (or better) as those
 of $B(x)$, and additionally that $\varphi^* f_i$ is meromorphic on $\D$ for all $i$.
  If we assume for contradiction that $\eta \in \Q$, then
 we can take some of these $f_i$'s to be $B(x)-\eta A(x)$ and its derivatives, but there may exist
 (unconditionally) 
 additional such functions. Our holonomy bound (see \cref{hol_bound} below) 
  provides an upper bound of $m$ in terms of $\varphi$ and denominator types when $|\varphi'(0)|$ is sufficiently large comparing to the denominators. We deduce that $\eta \notin \Q$ precisely when we find more functions $f_i$ (including $B(x)-\eta A(x)$ and its derivatives) than the upper bound provided by the following theorem proved in~\cite[Theorem~2.5.1]{L2chi}. 
  
\begin{theorem}\label{hol_bound}
Consider two positive integers~$m,r \in \NwithoutzeroA$ and an $m \times r$ rectangular array of nonnegative real numbers 
$\mathbf{b} := \big( b_{i,j} \big)_{\substack{ 1 \leq i \leq m, \,
1 \leq j \leq r }},$
 all of whose columns have the form
$ 0 = b_{1,j} =\cdots = b_{u_j,j} < b_{u_j+1,j}= \cdots = b_{m,j}=: b_j$, $\forall j = 1, \ldots, r,$
for some $u_j \in \{0, 1, \ldots, m\}$ depending on the column. 
Let
$
\sigma_i := b_{i,1} +\ldots + b_{i,r}, \, i = 1, \ldots, m
$
be the $i$-th row sum, and define
$\displaystyle
\tau(\mathbf{b}) := \frac{1}{m^2} \sum_{i=1}^{m} (2i-1) \sigma_i = \sigma_m -\frac{1}{m^2}\sum_{j=1}^r u_j^2 b_j \in [0, \sigma_m]$.

Consider a holomorphic mapping $\varphi : (\Db, 0) \to (\C,0)$ as above with derivative ({\it conformal size}) satisfying the condition $ \log{|\varphi'(0)|} > \tau(\bb)$. 
Suppose there exists an $m$-tuple $f_1, \ldots, f_m \in \Q \llbracket x \rrbracket$ of $\Q(x)$-linearly independent formal functions with denominator types of the form
\begin{equation}\label{den type 1}
f_i(x) = \sum_{n=0}^{\infty} a_{i,n} \frac{x^n}{ [ 1, \ldots, b_{i,1} \cdot n] \cdots [1,\ldots, b_{i,r} \cdot n]}, \qquad a_{i,n} \in \Z, 
\end{equation}
 such that $f_i(\varphi(z)) \in \C \llbracket z \rrbracket$ is the germ of a meromorphic function on $|z| < 1$ for all $i = 1, \ldots, m$; assume that all $f_i$ are holonomic.
 Then 
\begin{equation}\label{BCbound} 
m  \leq  \frac{ \iint_{\T^2} \log|\varphi(z)-\varphi(w)| \, \mv(z) \mv(w) }{  \log{|\varphi'(0)|} - \tau(\mathbf{b}) }. 
\end{equation}
\end{theorem}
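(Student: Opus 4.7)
The plan is to apply an arithmetic slope method, in the spirit of the Bost--Chambert-Loir arithmetic Hilbert--Samuel bound, to the $\Q$-vector space $V := \mathrm{span}_{\Q}(f_1, \ldots, f_m)$. The strategy is to estimate, in two different ways, the Arakelov degree of a natural integral lattice in $V$ induced by the denominator structure in~\cref{den type 1}: an arithmetic bound driven by $\tau(\mathbf{b})$, and an analytic bound driven by $\log|\varphi'(0)|$ together with the double integral on the right of~\cref{BCbound}. Comparing the two via the slope inequality will rearrange to the claimed estimate.

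Concretely, for large $N$ I would consider the jet-evaluation map
$$J_N \colon V \otimes_{\Q} \C \longrightarrow \C\llbracket z\rrbracket / z^N, \qquad f \longmapsto f\bigl(\varphi(z)\bigr) \bmod z^N.$$
Holonomy combined with $\Q(x)$-linear independence of the $f_i$ forces $J_N$ to be injective once $N$ exceeds a threshold depending only on the $f_i$, so $V_N := J_N(V)$ is an $m$-dimensional subspace of the space of polynomials of degree~$<N$. Endowing the latter with the $L^2$-metric along $\T$, and using that each $\varphi^* f_i$ is meromorphic on $\D$ (so that its truncation at order~$N$ faithfully captures its behaviour on $\T$ up to a controlled pole contribution), a Vandermonde/Fekete expansion shows that the $L^2$-covolume of $V_N$ grows, to leading order in $N$ and $m$, like
$$-m \log|\varphi'(0)| + \frac{m^2}{2} \iint_{\T^2} \log|\varphi(z)-\varphi(w)|\,\mv(z)\mv(w) + o(m^2).$$

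On the arithmetic side, the prime number theorem gives $\log[1,\ldots,Bn] = Bn + o(n)$, so the denominator of the $n$-th coefficient of $f_i$ contributes $\sigma_i\, n + o(n)$. Summing over $n < N$ and pairing basis elements to monomial degrees in the order that matches small~$\sigma_i$ to low-degree monomials (a Gram--Schmidt type rearrangement), the log-denominator of $V_N$ is evaluated to leading order as $\tfrac{N^2}{2}\tau(\mathbf{b}) + o(N^2)$; the weight $(2i-1)/m^2$ in $\tau(\mathbf{b})$ arises from this averaging, and the sharper correction $-\tfrac{1}{m^2}\sum_j u_j^2 b_j$ is extracted by exploiting the column-staircase structure $b_{u_j+1,j} = \cdots = b_{m,j} = b_j$ one column at a time. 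Imposing the slope inequality (arithmetic covolume $\leq$ analytic covolume) and solving for $m$ produces~\cref{BCbound}.

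The main obstacle I expect is the combinatorial bookkeeping needed to extract the sharp $\tau(\mathbf{b})$ rather than the crude upper bound $\sigma_m$: this requires handling the column-by-column staircase structure of $\mathbf{b}$, and the weighting $(2i-1)$ in $\sigma_i$ must be shown to emerge from an \emph{optimal} matching rather than merely a convenient surrogate. A secondary technical point is the joint use of meromorphy and holonomy on the analytic side: holonomy is what guarantees that the pole orders of $\varphi^* f_i$ on $\D$ grow only polynomially with the ``size'' of $f_i$, ensuring that truncated $L^2$-norms along $\T$ faithfully approximate the full ones and keeping the analytic error $o(m^2)$ truly subleading against the main term.
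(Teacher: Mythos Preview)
Your general strategy---Bost's slopes inequality comparing an arithmetic covolume to an analytic one---is correct, but the space you propose to run it on is too small, and this is a genuine gap rather than a bookkeeping issue. Your $V = \mathrm{span}_{\Q}(f_1,\ldots,f_m)$ has fixed dimension~$m$; its jet image $V_N$ is still $m$-dimensional for every~$N$, so the covolume of $V_N$ is a single number, not something whose leading $N^2$-asymptotics can be compared. Your own write-up reflects this: the analytic side you quote is $-m\log|\varphi'(0)| + \tfrac{m^2}{2}\iint$, with no $N$-dependence, while the arithmetic side you quote is $\tfrac{N^2}{2}\tau(\mathbf{b})$. These live on different scales and cannot be meaningfully compared.

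What is missing is the \emph{auxiliary polynomial amplification}: the paper works not with~$V$ but with the rank-$m(D{+}1)$ module $E_D = \bigoplus_i y_i \cdot \cO_K[1/x]_{\leq D}$ (functions times Laurent polynomials of degree~$\leq D$), equipped with the Bost--Charles metric at the Archimedean places. The evaluation map then sends $E_D$ into $x^{-D}K\llbracket x\rrbracket$, filtered by vanishing order, and the set $\mathcal{V}_D$ of~$m(D{+}1)$ filtration jumps is where the slopes inequality is summed. Both $\widehat{\deg}\,\overline{E}_D$ and $\sum_{n\in\mathcal{V}_D}\sum_v h_v(\psi_D^{(n)})$ are then of order~$D^2$, and dividing through gives~\cref{BCbound}. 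The factor~$m$ on the left of~\cref{BCbound} emerges because $\mathcal{V}_D \subset \{0,\ldots,(m+\delta)D\}$, so that $\sum_{n\in\mathcal{V}_D} n \approx \tfrac{m^2D^2}{2}$. This containment is the Chudnovsky--Osgood functional bad approximability theorem, and \emph{that} is where holonomy enters---not, as you suggest, through bounding pole orders of $\varphi^*f_i$. Without the $[0,(m+\delta)D]$ confinement of the vanishing orders, the $\log|\varphi'(0)|$ term would not acquire the $m^2$ coefficient needed for the argument to close.
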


Let $\vec{b}:=(b_{1},\dots, b_{r})$ and $\cV(\varphi, \vec{b})$ denote the $\Q(x)$-span of power series $f$ where $\varphi^* f$ is meromorphic on $\D$ and  
\begin{equation}\label{den type}
f(x) = \sum_{n=0}^{\infty} a_{n} \frac{x^n}{ [ 1, \ldots, b_{1} \cdot n] \cdots [1,\ldots, b_{r} \cdot n]}, \qquad a_{n} \in \Z.
\end{equation}
When $\log |\varphi'(0)| > \sum_{j=1}^r b_{j}$, \cref{hol_bound} provides an upper bound of $\dim_{\Q(x)} \cV(\varphi, \vec{b})$. 

\begin{remark}
In \cite[\S\S~6--8]{L2chi}, we allow more general denominators of $f_i$, and we have various improvements and alternative bounds. For the rest of this section, we will discuss upper bounds on the irrationality measure of $\eta$ and use the bound \cref{BCbound} as a showcase. We hope it will be clear to the reader how the proof idea of \cref{theotrue} can be used to refine all our qualitative holonomy bounds in~\cite[\S\S~6--8]{L2chi} to include an irrationality measure. By the same token, simpler proofs of the bounds needed for the application to~\cref{main Diophantine theorem} 
can be more directly obtained with the Perelli--Zannier dynamic box principle technique that we explain in~\cite[Appendix~B]{L2chi}.
\end{remark}

\subsection{Irrationality measures.}
We recall the definition of irrationality measure. 
\begin{definition}  \label{def: irrationality measure} Let~$v$ be a place of~$\Q$.
The irrationality exponent/measure of $\eta \in \Q_v \setminus \Q$, denoted by $\mu(\eta)$, is the supremum of the set of numbers $\kappa$ such that 
$\displaystyle
0< \left|\eta - \frac{p}{q} \right|_v < \frac{1}{q^\kappa}
$
is satisfied by infinitely many $(p,q)\in \Z \times \Z_{>0}$.
\end{definition}
In other words, for $\kappa > \mu(\eta)$, the above inequality only has finitely many solutions $(p,q)$ as above; thus there exists a constant $C=C(\eta, \kappa)\in \R_{>0}$ such that for all $(p,q)\in \Z \times \Z_{>0}$,
we have
$\displaystyle \left|\eta - \frac{p}{q} \right|_v > \frac{C}{q^\kappa}$.
When~$\eta$ is algebraic of degree~$d \geq 2$, Liouville's theorem~\cite[ch.~1 \S~1.6]{FeldmanNesterenko} gives $\kappa = d$ with an 
effective constant~$C$, and in this case there is a distinction to be made between~\cref{def: irrationality measure} (which, by Roth's theorem, reads $\mu(\eta) = 2$ whenever $2 \leq [\Q(\eta):\Q] < \infty$), and exponents~$\kappa$ for which an explicit $C(\eta,\kappa)$ can be provided. 
The infimum of the latter~$\kappa$, though this not a mathematical definition, is commonly denoted~$\mueff(\eta)$, and it is the notion featured in all our theorems in this paper (see \cref{rmk: effC}). We denote by $\mueffKv(\eta)$ the straightforward generalization over a number field~$K$ and place $v \in M_K$ for $v$-adic $K$-rational approximations to an $\eta \in K_v \setminus K$.

\cref{hol_bound} extends to a quantitative refinement that includes an explicit upper bound on the irrationality measure~$\mu(\eta)$ 
for an
 Ap\'ery limit $\eta \in \R\setminus \Q$ as considered in~\cref{sec: irrational};
for concreteness we formulate our next proposition for the case of the Archimedean place of~$\Q$.
Let $\cV(\varphi, \vec{b}, \eta)$ denote the $\R(x)$-span of power series $g\in \R \llbracket x \rrbracket$ of form $g(x)=B(x) - \eta A(x)$
such that $\varphi^*g$ is meromorphic on $\D$ and $A(x), B(x)$ satisfy \cref{den type}.
\begin{proposition} \label{theomainreprise}
Fix
$\varphi : (\Db,0) \to (\C,0)$ and a denominator type~$\vec{b}$
with
$|\varphi'(0)| > e^{\sum_{j=1}^r b_j}$.   Let
$$m = \dim_{\R(x)}\cV(\varphi, \vec{b}, \eta) \ge 
 \dim_{\Q(x)}\cV(\varphi, \vec{b})=\gamma \cdot m, \quad \gamma \in \Q \cap (0,1].$$ 
Take $\rho \in (0,1]$ so that there exists a basis $\{g_i=B_i - \eta A_i\}$ of $\cV(\varphi, \vec{b}, \eta)$ where~$\varphi^*A_i$ and~$\varphi^*B_i$ both individually converge on~$|z| < \rho$.
Consider $\bb$ as in \cref{hol_bound} such that $A_i, B_i$ satisfy \cref{den type 1}. Assume that all $g_i$ are holonomic. 
Then
\begin{equation} \label{holokappaigen}
m \leq \frac{\iint_{\T^2} \log{|\varphi(z)-\varphi(w)|} \, \mv(z)\mv(w)   }{\log{|\varphi'(0)| }- \tau(\bb)  - (1-\gamma) \left( 2 \mu(\eta)^{-1} - (1-\gamma)\mu(\eta)^{-2} \right)  \log{   \frac{1}{ \rho }   }},
\end{equation}
conditionally on the positive denominator. 
\end{proposition}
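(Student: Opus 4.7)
Proof proposal. My plan is to prove \cref{theomainreprise} by adapting the derivation of \cref{hol_bound} to a modified system of test functions in which Diophantine approximations of $\eta$ have been substituted into the $(1-\gamma)m$ ``bad'' basis elements $g_i$. Using the hypothesis $\dim_{\Q(x)}\cV(\varphi,\vec{b}) = \gamma m$, I first fix a basis of $\cV(\varphi, \vec{b}, \eta)$ of the form $\{f_1,\dots,f_{\gamma m}\} \cup \{g_{\gamma m+1},\dots,g_m\}$, with the $f_j \in \cV(\varphi,\vec{b})$ (arising as $B=f_j$, $A=0$) and each $g_i = B_i-\eta A_i$ having $\varphi^*g_i$ meromorphic on $\D$ while $\varphi^*A_i$ and $\varphi^*B_i$ are individually holomorphic only on $|z|<\rho$. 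For every $\epsilon>0$ the definition of $\mu:=\mu(\eta)$ supplies infinitely many $p/q$ with $|q\eta-p|\leq q^{1-\mu+\epsilon}$, and I would replace each bad $g_i$ by its rational surrogate $\tilde g_i := qB_i - pA_i = qg_i + (q\eta-p)A_i$. The new family $\{f_j\}\cup\{\tilde g_i\}$ is $\Q(x)$-linearly independent for $q$ large (since $\tilde g_i/q \to g_i$ uniformly on compacta of $|z|<\rho$) and retains the integer denominator type $\vec{b}$, at the price of an extra $(1-\gamma)\log q$ contribution to the analogue of $\tau(\bb)$ coming from the $q$-scaling on the bad rows.

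Analytically, $\tilde g_i$ itself is only guaranteed holomorphic on $|z|<\rho$, so \cref{hol_bound} does not apply off-the-shelf. The core step is therefore to rerun the arithmetic-determinant argument underlying \cref{hol_bound} (as in \cite[\S 2]{L2chi}, or by the more flexible dynamic-box variant of \cite[Appendix~B]{L2chi}) on this mixed system, bounding each bad row by interpolating between two regimes: on the large contour $|z|\to 1$ one replaces $\tilde g_i$ by the meromorphic-on-$\D$ piece $qg_i$, paying the error $|q\eta-p|\cdot\|\varphi^*A_i\|_{|z|<\rho} = O(q^{1-\mu+\epsilon})$, while on the small contour $|z|\leq\rho$ one uses the honest power-series estimate for $\tilde g_i$. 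A Hadamard three-circles / log-convex extrapolation then effectively charges each bad row with $\log(1/\rho)$, weighted by the proportion of the Diophantine gain $(\mu-1-\epsilon)\log q$ that remains after absorbing the arithmetic loss $\log q$ on that row.

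Optimizing this weight over $q\to\infty$ (and subsequently $\epsilon\to 0$) should produce, as the coefficient of $\log(1/\rho)$ in the correction to the denominator of \cref{holokappaigen}, a quadratic in $s:=(1-\gamma)/\mu$ of the form $1-(1-s)^2 = (1-\gamma)\bigl(2\mu^{-1} - (1-\gamma)\mu^{-2}\bigr)$. The main obstacle I expect is precisely this two-parameter optimization—simultaneously calibrating the approximation scale $q$ with the per-row three-circles exponent for each of the $(1-\gamma)m$ substituted rows—to land exactly on that quadratic form; I anticipate the per-row flexibility of the Perelli--Zannier dynamic-box formulation from \cite[Appendix~B]{L2chi} will provide the cleanest bookkeeping, and should simultaneously justify the ``conditionally on the positive denominator'' clause by exhibiting when the interpolation has enough room to operate.
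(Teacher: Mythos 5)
Your high-level strategy is in the right spirit and you correctly anticipate the quadratic form $1-(1-(1-\gamma)/\mu)^2$ of the correction coefficient, but the analytic mechanism you propose does not match the paper's and I am not convinced it would land exactly on the stated bound. The paper proves \cref{theomainreprise} as a special case of \cref{theotrue}, whose proof runs Bost's slopes method with the Diophantine data baked into a Hermitian metric on the auxiliary module $E_D$ (the norms of the basis vectors $y_i$ carry weights $\max(|\beta|_v,1)^{\nu(i)}$). The substitution of the approximant $\beta = p/q$ for $\eta$ is not done by forming a modified system $\{\tilde g_i\}$ and feeding it back into \cref{hol_bound}; it is done inside the evaluation map $\psi_D$, and the crucial analytic estimate is \emph{not} Hadamard three-circles. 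Rather, one writes $h_i := f_i - g_i = (\beta^j-\eta^j)A_{i,j}$, truncates $h_i(\varphi(z))$ to order $n$, and exploits subharmonicity of $\log|z^{-n}F(z)|$ where $F$ is the sum of the overconvergent piece and the polynomial truncation; the coefficient bound $|(h_i\circ\varphi)_{\leq n}|=O(H(\beta)^{-\kappa+\epsilon}\rho^{-n})$ then yields a term $\bigl(-\kappa h(\beta)+n\log(1/\rho)\bigr)^+$, which is subsequently integrated over the filtration index $n\in[0,mD]$. That integral of a positive part — not a per-row three-circles exponent — is what produces the quadratic in the optimizing ratio $q:=h(\beta)/D$.

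This points to the concrete gap in your sketch: you never introduce the auxiliary degree parameter $D$, and the single optimization you propose (over the denominator of the approximant, with the three-circles exponent playing an auxiliary role per row) is not the right one. The correct calibration is a \emph{per-evaluation-index} crossover — for $n\log(1/\rho)$ small relative to $\kappa h(\beta)$ the bad rows contribute nothing extra, and above the crossover they charge $n\log(1/\rho)-\kappa h(\beta)$ — and then one minimizes the resulting quadratic in $q=h(\beta)/D$. Your proposed bookkeeping (a fixed $(1-\gamma)\log q$ bump to $\tau(\bb)$, plus a per-row three-circles penalty) treats the cost as constant across $n$; that cannot produce the integral $\int_0^m (t\log\rho^{-1}-\kappa q)^+\,dt$ which is the actual source of the $(1-\gamma)\bigl(2\mu^{-1}-(1-\gamma)\mu^{-2}\bigr)\log\rho^{-1}$ term. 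Secondary issues: your linear-independence claim for $\{f_j\}\cup\{\tilde g_i\}$ at fixed $(p,q)$ needs more care than a uniform-limit argument gives (the slopes framework sidesteps this), and the ``analogue of $\tau(\bb)$'' is better understood as the $-E\,h(\beta)\,mD$ term in $\ardeg\ovE_D$ with $E=\sum_\nu\nu\gamma_\nu$ (which equals $1-\gamma$ when $\mu=1$), not a modification to the denominator type. The Perelli--Zannier dynamic-box variant you cite is indeed flagged in the paper as an alternative route, but you would still need to replace three-circles with the truncation/positive-part estimate and restore the $D$-optimization to make the argument land.
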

If we set $\mu(\eta)\rightarrow \infty$ in \cref{holokappaigen}, we recover \cref{BCbound}. In particular, every qualitative irrationality proof by the holonomy bounds method of~\cref{hol_bound} automatically refines via~\cref{holokappaigen} to an explicit measure of irrationality. 
It must  be said, however, that in this form of the result,  the extra coefficient~$2$ of~$\mu(\eta)^{-1}$ in~\cref{holokappaigen} 
means that, when the \emph{usual} Ap\'{e}ry limit works, \cref{theomainreprise} does \emph{not}
recover the most basic standard bound on the irrationality measure.
It remains an open problem to give an $\varepsilon$-improvement over the usual irrationality measure in every single case. 

\begin{remark} \label{withintegrationsremark}
Consider the general denominator type as in \cite[Thm~6.0.2 and~7.0.1]{L2chi}, i.e., $A_i, B_i$ are of the form 
\[a_{i,0} +  \sum_{n=1}^{\infty} a_{i,n} \frac{x^n}{n^{e_i} [ 1, \ldots, b_{i,1} \cdot n] \cdots [1,\ldots, b_{i,r} \cdot n]}, \qquad a_{i,n} \in \Z,\] where $\be=(e_1,\dots, e_m)\in \N^m$. 
Define~$\tau^{\sharp}(\mathbf{e})$ as in~\cite[\S 6]{L2chi}.
Then we have the following bound on 
 $\mu(\eta)$:
\begin{equation}  \label{withintegrations}
m \leq \frac{\iint_{\T^2} \log{|\varphi(z)-\varphi(w)|} \, \mv(z)\mv(w)   }{\log{|\varphi'(0)| }- \tau(\bb) - \tau^\sharp(\be) - (1-\gamma) \left( 2 \mu(\eta)^{-1} - (1-\gamma)\mu(\eta)^{-2} \right)  \log{   \frac{1}{ \rho }   }}.
\end{equation}
\end{remark}

\Cref{theomainreprise} is a special case, with $K=\Q$ and $S=\{v\}$ a singleton, of \Cref{theotrue} below. For our more general setup, we consider
a number field
 $K$ together with a finite subset $S\subset M_K$ of its places, whereby $K$-rational approximations (sometimes restricted to be from a subfield of~$K$) are to be made simultaneously in all places from~$S$. 
We organize the targets of the approximation into a ``soft'' collection $\eta=(\eta_u)_{u\in S}$ 
of local elements $\eta_u \in K_u$, which in theory do not have to be related, but in practice always have some  sort of structural connection, although not always by a rule as simple 
as to take the $u$-embedding of an underlying element~$\eta \in K$.
One central example of this, which in a Kummer equivariant form occurs within the proofs of \cref{effective Thue-Siegel-Mahler,effective p-adic}, is similar to the equivariant setup~\cite[page~75]{BombieriThueSurvey} of Bombieri, van der Poorten, and Vaaler: 
the approximants~$\beta$ are to be drawn from a subfield~$F \subset K$, with $K/F$ being Galois and the set~$S \subset M_K$ being~$\Gal(K/F)$-stable, and
the assignment $u \rightsquigarrow \eta_u$ transforms according to a given projective representation $\pi : \Gal(K/F) \to \PGL_2(F)$.

Following the framework of Andr\'e~\cite[\S~VIII]{AndreG} but now combined with axiomatics of Ap\'ery limits, we consider a collection $(\varphi_v)_{v\in M_K}$
of analytic functions on an unspecified open neighborhood of~$\Db_v$, with  $\varphi_v(z)=z$ for all but finitely many~$v$; and a collection $(\rho_u)_{u\in S}$
of radii~$\rho_u \in (0,1]$. 
These data are to be used for imposing and  controlling convergence properties of the formal functions in the Ap\'ery limits method. 
Though one may consider more general linear independence settings, the focus here on single Ap\'ery limits, but used in a power system $\{1,H,\ldots,H^k\}$,
compels us to  consider only the powers $\eta^j := (\eta_u^j)_{u \in S}$ of the original Ap\'ery limit, along with the implied $K$-rational approximations $\beta^j$. 
We 
define $\cV(\{\varphi_v\}, \{\rho_u\}, \vec{b}, \eta, \nu)$ to be the formal\footnote{This we take to mean to consider in every realization $u \in S$ the $K_u( x)$-linear span of the power series $\sum_{j=0}^\nu \eta_u^j A_j(x)
\in K_u\llbracket x \rrbracket$. A $K(\eta,x)$-linear dependence is defined to be a $K(\eta,x)$-linear form which evaluates to a $K_u(x)$-linear dependence simultaneously in 
all places~$u \in S$.} $K(\eta, x)$-linear span of all formal power series $g\in K(\eta) \llbracket x \rrbracket$ of the form $g(x)=\sum_{j=0}^\nu \eta^j A_j(x) $ such 
that:
\begin{enumerate}
\item
For all $u\in S$, $\varphi_u^*g \in \hM(\D_u)$ is meromorphic on $\D_u$,  and every individual $\varphi_u^*A_j$ is convergent on $|z|_u < \rho_u$. 
\item For all $v\in M_K \setminus S$,  every individual $\varphi_v^*A_j$ is meromorphic on $\D_v$. 
\item There exists a finite subset $S' \subset M_K$ for which $A_0, \ldots, A_{\nu} \in \sum_{n=0}^{\infty} \frac{x^n \,  \cO_{K,S'}}{ [ 1, \ldots, b_{1} \cdot n] \cdots [1,\ldots, b_{r} \cdot n]}$. 
\end{enumerate}

\begin{theorem} \label{theotrue}
Given $(\varphi_v)_{v\in M_K}, \eta=(\eta_u)_{u\in S}, (\rho_u)_{u\in S} \in (0,1]^S$ as above, consider $r, \mu \in \N$ and, for $\nu = 0,1, \ldots, \mu$, a vector $\vec{b}_{\nu} \in [0,\infty)^{r}$, and a list of \emph{holonomic} (over~$K(x)$) elements
\[g_{\nu, 1}, \ldots, g_{\nu, m_{\nu}} \in \cV(\{\varphi_v\}, \{\rho_u\}, \vec{b}_{\nu}, \eta, \nu) \setminus \cV(\{\varphi_v\}, \{\rho_u\}, \vec{b}_{\nu}, \eta, \nu-1)\] satisfying conditions (1)--(3) above. Assume the totality $\{g_{\nu, i}\}_{0\leq \nu \leq \mu, 1\leq i \leq m_\nu}$ of these~$m = m_0+ \ldots + m_{\mu}$ holonomic functions to be $K(\eta,x)$-linearly independent, and
order them in some arbitrary way. Consider for the chosen ordering a denominators capping $m \times r$-matrix~$\bb$ obeying the constraint spelled out in \cref{hol_bound}, 
and having its $k$-th row vector dominate entrywise the~$\vec{b}$-vector $\vec{b}_{\nu}$ of the $k$-th function $g_{\nu,i}$. 

Consider a vector of exponents $\boldsymbol{\kappa} = (\kappa_u)_{u \in S} \in \R_{>0}^{S}$ subjected to the 
inequalities: 
\begin{equation}  \label{here it is}
\begin{aligned}
m  & >  \frac{ \sum_{v \in M_K, v\mid \infty} \iint_{\T_v^2} \log{|\varphi_v(z)-\varphi_v(w)|} \, \mv + \sum_{v \in M_K, v\nmid \infty} \sup_{z\in \T_v} \log |\varphi_v(z)|_v}{ \sum_{v \in M_K} \log{|\varphi'(0)|_v}
- \tau(\bb) 
-  \sum_{u\in S} \log{\rho_u^{-1}} +  \frac{ \left( \sum_{u\in S} \kappa_u -  \sum_{\nu=0}^{\mu}  \nu m_{\nu}/m  \right)^2}{ \sum_{u\in S} \frac{\kappa_u^2}{\log{\rho_u^{-1}}} }},  \\
\kappa_u & \leq   \frac{   \log{\rho_u^{-1}}  \sum_{v \in S} \frac{\kappa_v^2}{\log{\rho_v^{-1}}}  }{  \sum_{v \in S} \kappa_v -  \sum_{\nu=0}^{\mu}  \nu m_{\nu}/m  },  \quad \forall u \in S.
 \end{aligned}
\end{equation}
Then there exists a constant~$C \in \R_{> 0}$ such that the following Diophantine property is in place: 
\begin{equation} \label{star}
\text{For all~$\beta \in K$ with height $h(\beta) > C$, there exists a place~$v \in S$ 
where}
\ 
|\eta_v - \beta|_v \geq  H(\beta)^{-\kappa_v}. 
\end{equation}
 \end{theorem}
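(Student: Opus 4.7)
The plan is to argue by contradiction. Suppose arbitrarily large $\beta \in K$ satisfy $|\eta_u - \beta|_u \leq H(\beta)^{-\kappa_u}$ simultaneously for every $u \in S$. The strategy is to specialize the formal symbol $\eta$ to the numerical approximation $\beta$ in each holonomic element $g_{\nu,i}(x) = \sum_{j=0}^{\nu} \eta^j A_j(x)$, producing a companion family of honestly $K$-rational holonomic power series
\[
\tilde g_{\nu,i}(x) := \sum_{j=0}^{\nu} \beta^j A_j(x) \in K\llbracket x\rrbracket,
\]
and then to apply to $\{\tilde g_{\nu,i}\}$ the multi-place arithmetic holonomy bound obtained by adapting \cref{hol_bound} to the setup of \cref{theotrue} (for instance via the Perelli--Zannier dynamic box principle mentioned in the remark after \cref{hol_bound}). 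A Vandermonde/specialization argument grounded in the hypothesized $K(\eta,x)$-linear independence of the $g_{\nu,i}$ ensures that, once $H(\beta)$ exceeds an effective threshold, the $\tilde g_{\nu,i}$ remain $K(x)$-linearly independent. Their denominator shape is still $\vec b_\nu$ modulo the height contributions $h(\beta^j) = j\,h(\beta)$, which enter the bookkeeping below.

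Next I collect the place-by-place analytic data of the new family. For $v \in M_K \setminus S$, condition~(2) gives $\varphi_v^* \tilde g_{\nu,i} \in \hM(\D_v)$. For $u \in S$, condition~(1) together with the telescoping estimate $|\beta^j - \eta_u^j|_u \leq j \max(|\beta|_u, |\eta_u|_u)^{j-1}|\beta - \eta_u|_u$ gives the decomposition
\[
\varphi_u^* \tilde g_{\nu,i} \;=\; \varphi_u^* g_{\nu,i} \;+\; \sum_{j=0}^{\nu} (\beta^j - \eta_u^j)\, \varphi_u^* A_j,
\]
in which the first summand is meromorphic on $\D_u$, and the second is holomorphic on $|z|_u < \rho_u$ with sup norm $\ll_{\nu} H(\beta)^{\nu - \kappa_u}$. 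Thus on the annulus $\rho_u \leq |z|_u < 1$, the numerical function $\tilde g_{\nu,i}$ fails to be meromorphic only through a correction that becomes negligible once $h(\beta)$ is large relative to $\nu$.

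Feeding this analytic input into the arithmetic holonomy machinery produces an upper bound on the number $m$ of $K(x)$-linearly independent $\tilde g_{\nu,i}$, whose numerator is the $\sum_{v \mid \infty} \iint_{\T_v^2} \log|\varphi_v(z) - \varphi_v(w)|\,\mv + \sum_{v \nmid \infty} \sup_{z \in \T_v} \log|\varphi_v(z)|_v$ of \cref{here it is}, and whose denominator is $\log|\varphi'(0)| - \tau(\bb)$ diminished by $\sum_{u \in S} \log \rho_u^{-1}$, reflecting that at $u \in S$ we control direct convergence of the $A_j$ only on $|z|_u < \rho_u$, and then \emph{enhanced} by a positive Cauchy--Schwarz correction supplied by the approximation quality: the error bounds above let one extrapolate each $\tilde g_{\nu,i}$ meromorphically onto an effective disc of radius $\rho_u \cdot H(\beta)^{c_u}$ at each place $u \in S$, subject to a global height cost growing linearly in $\sum_\nu \nu m_\nu / m$ per unit of $h(\beta)$. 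Optimizing the weights $c_u$ via Lagrange multipliers produces exactly the correction $\frac{(\sum_u \kappa_u - \sum_\nu \nu m_\nu / m)^2}{\sum_u \kappa_u^2/\log \rho_u^{-1}}$, and the per-place inequalities $\kappa_u \leq \ldots$ in the second line of \cref{here it is} are the Kuhn--Tucker conditions for this optimum to be attained at an interior critical point. The strict inequality in the first line of \cref{here it is} then contradicts this upper bound for $h(\beta) > C$, yielding \cref{star}.

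The main obstacle, beyond the bookkeeping, is the joint optimization across $|S|$ places: at each $u \in S$ the analytic enhancement is driven by the excess $\kappa_u - \nu$ weighted by $\log \rho_u^{-1}$, whereas the arithmetic cost is a single global height $\sum_\nu \nu m_\nu/m \cdot h(\beta)$. Aligning these two ledgers cleanly and simultaneously is what forces the quadratic form in the $\kappa_u$, and the per-place Kuhn--Tucker constraints record precisely the regime in which the underlying Cauchy--Schwarz inequality is sharp and no single place-specific approximation dominates the rest.
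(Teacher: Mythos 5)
Your proposal captures the paper's overall strategy: argue by contradiction, specialize the formal symbol $\eta$ to the hypothetical good approximation $\beta$ to obtain genuinely $K$-rational holonomic germs $\tilde g_{\nu,i}$, exploit the approximation quality to control the deviation $\sum_j(\beta^j - \eta_u^j)\varphi_u^*A_{i,j}$ at each $u\in S$, and derive from a multi-place holonomy bound a denominator enhanced by the Cauchy--Schwarz correction $\bigl(\sum_u\kappa_u - \sum_\nu\nu m_\nu/m\bigr)^2 \big/ \sum_u \kappa_u^2/\log\rho_u^{-1}$. This is the paper's outline.

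That said, you hand-wave precisely where the paper does the work, and two of your framings do not match the actual mechanism. First, the implementation is via Bost's slopes method on an explicit Hermitian vector bundle $\ovE_D$ over $\Spec\cO_K$, whose metric carries weights $\max(|\beta|_v,1)^{\nu(i)}$ precisely to pay the cost $h(\beta^j) = jh(\beta)$; you gesture at this as ``bookkeeping'' without actually introducing the vector bundle or the slopes inequality. Second, the places in $S$ are \emph{not} handled by extrapolating $\tilde g_{\nu,i}$ ``onto a disc of radius $\rho_u H(\beta)^{c_u}$.'' Rather, the paper truncates the error series $h_i(\varphi_u(z))$ at degree $n$, forms a corrected function $F(z)$ that is genuinely holomorphic on the \emph{original} disc $\Db_u$ and shares its lowest-order term with $\varphi_u^*(x^D s)$, then runs the subharmonicity of $\log|z^{-n}F(z)|$, paying the truncation cost $(-\kappa_u h(\beta) + n\log\rho_u^{-1})^+$ coming from $|\beta-\eta_u|_u\,\rho_u^{-n}\ll H(\beta)^{-\kappa_u+\varepsilon}\rho_u^{-n}$. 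The effective disc never gets larger; the approximation enters as an additive local-height credit. Third, the quadratic-form correction is not produced by a multi-parameter Lagrange/Kuhn--Tucker optimization over per-place weights $c_u$: the paper minimizes a single genuine quadratic in $q = h(\beta)/D$, and the second line of \cref{here it is} is exactly the regime condition $\chi_u := \kappa_u q/\log\rho_u^{-1}\leq m$ at the minimizing $q$, i.e.\ the condition that the piecewise-linear integrand $\max\{0, t\log\rho_u^{-1}-\kappa_u q\}$ was integrated on the correct piece. It is a feasibility constraint for the chosen optimizer, not a stationarity (Kuhn--Tucker) condition. If you replace ``extend the disc'' by ``truncate and apply subharmonicity on the fixed disc,'' build the slopes-method bundle explicitly, and carry out the single-variable optimization in $q$, your sketch becomes the paper's proof.
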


\begin{remark}\label{rmk: effC} 
By Northcott's theorem~\cite[Theorem~1.6.8]{BombieriGubler}, \cref{star} is another way to say that all but 
finitely many~$\beta \in K$ have some $v = v(\beta)$ where $|\eta_v - \beta|_v \geq  H(\beta)^{-\kappa_v}$. As in~\cref{main Diophantine theorem}, the point of writing it this 
way
is that~$C$ in our proof can readily be made explicit in terms of meromorphic presentations $\varphi_u^* g_{\nu,i} = R^u_{\nu,i}/V^u_{\nu,i}$ and $\varphi_v^* A_{\nu,i,j} = S^v_{\nu,i,j}/W^v_{\nu,i,j}$ with $V^v_{\nu,i}(0) = W^v_{\nu,i,j}(0) = 1$ along with the data 
 $\big( |\eta_u|_u, \kappa_u, \rho_u \big)_{u \in S}$, $\bb$, 
  $\sup_{\T_v}{\log{ | R^u_{\nu,i}|}}$, $\sup_{\T_v}{\log{ | V^u_{\nu,i}|}}$, $\sup_{\T_v(\rho_v)} \log{ |S^u_{\nu,i,j}|}$, and $\sup_{\T_v(\rho_v)} \log{ |W^u_{\nu,i,j}|}$
 for all $u \in S$ and $v \in M_K$ (extending $\rho_v := 1$ to $v \in M_K \setminus S$), in a linear dependence on those four suprema uniformly in terms of the Chudnovsky--Osgood functional bad approximability constant for the system  $\{ A_{\nu,i,j} \}$, as discussed in~\cite[\S~3.2]{L2chi} and bounded explicitly by Chudnovsky's~\cite[Theorem 3.2.10]{L2chi} in the case of a differentially closed system. 
 \end{remark}

\begin{remark}
Comparing to~\cref{BCbound}, the extra term subtracted off in the denominator of~\cref{here it is} is 
$
  \sum_{u\in S} \alpha_u -  \left( \sum_{u\in S} \kappa_u - E\right)^2 / \left(\sum_{u \in S} \kappa_u^2/\alpha_u \right),
$
where 
$
\alpha_u := \log{\rho_u^{-1}}$, $\gamma_{\nu} := m_{\nu}/m \textrm{ with } \sum_{\nu=0}^{\mu} \gamma_{\nu} = 1$, $ E := 
 \sum_{\nu=0}^{\mu}  \nu \gamma_{\nu} \in [0,\mu]$.
The positivity of this term for the case~$\sum_{u\in S} \kappa_u > E$ follows by the Cauchy--Schwarz inequality.
In the proportionality regime $\kappa_u := \alpha_u T$, the second condition in~\cref{here it is} is automatically fulfilled, and the subtracted term reduces to  
$2/(ET) -   (E/T)^2 /\left( 
\sum_{u} \alpha_u \right)$.
This with $T \to \infty$ is how \cref{theotrue} subsumes \cref{hol_bound}.
\end{remark}

\subsection{Proof of \cref{theotrue}.}
We use the framework of Bost's slopes method~\cite[\S\S 4.1, 4.2]{BostFoliations}. 
Without loss of generality upon permuting the columns of~$\bb$, the indices~$u_i$ in the condition
spelled out in~\cref{hol_bound} satisfy $u_1 \leq u_2 \leq \ldots \leq u_r$. 
In our ordering, let us denote by~$g_i$ the $i$-th function $\{g_{\nu, j}\}$, and by~$\nu(i)$ the index for which $g_i \in \cV(\{\varphi_v\}, \{\rho_u\}, \vec{b}, \eta, \nu(i)) \setminus \cV(\{\varphi_v\}, \{\rho_u\}, \vec{b}, \eta, \nu(i)-1)$.

We present the argument in its qualitative form: assume for contradiction that the constant $C$ does not exist. 
This means there is some $\varepsilon>0$ 
and an infinite sequence of $\beta \in K$ with $H(\beta)\rightarrow \infty$ 
along which
$
|\eta_u - \beta|_u < H(\beta)^{-\kappa_u + \varepsilon}, 
$
simultaneously for all~$u \in S$. 
(We shall let $\varepsilon \rightarrow 0$ at the end of the proof.)
Consider the following $\cO_K$-module of rank $m(D+1)$: 
\[
E_D:= \bigoplus_{h=0}^r \bigoplus_{i=u_h+1}^{u_{h+1}} \frac{1}{[ 1, \ldots, u_{h+1}b_{h+1} D] \cdots [1,\ldots, u_rb_r D]} \, y_i \cdot \cO_K[1/x]_{\leq D},
\]
where the orthogonal direct sum is in the category of Hermitian vector bundles over $\Spec \cO_K$:
for $v\nmid \infty$,
the vectors $y_i, y_i x^{-1}, \dots, y_i x^{-D}$ are equipped with the norm
$\| y_i x^k \|_v := \max(| \beta |_v,1)^{\nu(i)} (\max_{ \T_v} \log |\varphi_v|_v)^k$.
For $v\mid \infty$, the Hermitian metric is defined
 (as in~\cite[\S\S~7.3.1, 7.3.3]{L2chi})
 for $s \in \cO_K[1/x]_{\leq D}$
 by $\| y_i s \|_v:= \max(| \beta |_v,1)^{\nu(i)} \| s\|_{\mathrm{BC}}$, where $\| \cdot \|_{\mathrm{BC}}$ signifies the Bost--Charles metric~\cite[\S\S~8.2-8.3]{BostCharles}. We use $\ovE_D$ to denote $E_D$ 
 with this Hermitian metric.
Approximately speaking, we will choose an optimizing $D$ in linear proportion with $h(\beta)=\log H(\beta)$. This allows us to 
 focus on $D\rightarrow \infty$ asymptotic estimates as we will let $h(\beta)\rightarrow \infty$  running along our sequence of counterexamples~$\beta \in K$.
 It will be apparent from the local estimates that the argument is actually effective. 
 
 By the arithmetic Hilbert--Samuel theorem and a theorem of Bost and Charles \cite[Theorem~5.4.1 and Proposition~5.4.2]{BostCharles}, we have
\begin{equation*}
\begin{aligned}
\ardeg \ovE_D =  & \left(\frac{m}{2}\left(\sum_{v\mid \infty} \iint_{\T_v^2} \log{|\varphi_v(z)-\varphi_v(w)|} \, \mv + \sum_{v\nmid \infty} \sup_{\T_v} \log |\varphi_v|_v \right) +\sum_{h=1}^r u_h^2 b_h  \right)D^{2} \\
& -\left( \sum_{\nu=0}^{\mu}  \nu \gamma_{\nu} \right) h(\beta)  \, mD +o(D^{2}+Dh(\beta)).
\end{aligned}
\end{equation*}

Write $g_i(x)= \sum_{j=0}^\mu \eta^j A_{i,j}(x)$. We evaluate~$E_D$
on 
$
y_i :=  \sum_{j=0}^\mu \beta^j A_{i,j}(x),
$
and thus for $h(\beta) \gg 1$ we obtain an injective homomorphism 
$
\psi_D : E_D \hookrightarrow x^{-D} K\llbracket x \rrbracket. 
$
We filter $x^{-D} K\llbracket x \rrbracket$ as
$x^{-D} K\llbracket x \rrbracket\supseteq x^{1-D} K\llbracket x \rrbracket \supseteq \ldots \supseteq x^{n-D} K\llbracket x \rrbracket\supseteq \ldots$,
and we metricize the graded quotients $x^{n-D} K\llbracket x \rrbracket/x^{n+1-D} K\llbracket x \rrbracket$ by $\| x^{n-D} \|_v=1$ for all $v\in M_K$. 
We define ${E}^{(n)}_{D} := \psi_D^{-1} \left( x^{n-D} K\llbracket x \rrbracket\right) $. For each $n \in \N$, the evaluation map $\psi_D$ induces an injective homomorphism 
$\psi_D^{(n)}: {E}^{(n)}_{D}/ {E}^{(n+1)}_{D}\hookrightarrow x^{n-D} K\llbracket x \rrbracket/x^{n+1-D} K\llbracket x \rrbracket.$
Therefore, $\rank \left( {E}^{(n)}_{D}/{E}^{(n+1)}_{D} \right) \in \{0,1\}$. Let 
$\mathcal{V}_{D} := \left\{n\in \N \, : \, \rank \left( {E}^{(n)}_{D}/{E}^{(n+1)}_{D} \right)=1 \right\}$. We have $\# \mathcal{V}_{D} = \rank E_D=m(D+1)$.
By the Chudnovsky--Osgood functional bad approximability theorem~\cite[Theorem~3.2.13]{L2chi}, we have  $\cV_D \subset \{0, 1, \ldots, (m+\delta)D\}$ once $\delta>0$ and $D\gg_{\delta} 1$. 

Bost's slopes inequality in our setting gives $\ardeg \ovE_D \leq \sum_{n\in \cV_D} \sum_{v\in M_K} h_v(\psi_D^{(n)})$.  We provide an upper bound on each local evaluation height $h_v(\psi_D^{(n)})$ as follows. For the ease of notation, we will adopt the convention that $\frac{0^2}{\log{1^{-1}}} = 0$ and set $\rho_v:=1$, $\kappa_v:=0$, and $\eta_v:=\beta$ for $v\notin S$. Write $h_i(x):=f_i(x) - g_i(x) = \sum_{j=0}^\mu (\beta^j - \eta^j) A_{i,j}(x)$. Let $\cL$ denote the line bundle $\cO([0])$ on $\bP^1_{\cO_K}$.

\begin{lemma}
For $v\mid \infty$, we have (here we can replace $\varepsilon$ by $0$ if $v\notin S$) the local  evaluation height upper bound: 
\[h_v(\psi_D^{(n)})  \leq -n \log |\varphi'_v(0)|_v+ D\iint_{\T_v^2} \log{|\varphi_v(z)-\varphi_v(w)|} \, \mv + (-(\kappa_v-\varepsilon) h(\beta) + n \log (1/\rho_v))^+ + o(n).\]
\end{lemma}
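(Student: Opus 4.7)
The local evaluation height $h_v(\psi_D^{(n)})$ is the log of the operator norm at $v$ of the rank-$\leq 1$ map between graded pieces, so for a representative $\xi = \sum_i y_i Q_i(x^{-1})$ of a nonzero class in $E_D^{(n)}/E_D^{(n+1)}$ we must bound
$\displaystyle \bigl|[x^{n-D}]\psi_D(\xi)\bigr|_v = \Bigl| \frac{1}{2\pi i} \oint_\gamma \sum_i f_i(\varphi_v(z))\, Q_i(\varphi_v(z)^{-1})\, \varphi_v(z)^{D-n-1}\, \varphi_v'(z)\, dz \Bigr|_v$
relative to the source norm $\max_i \max(|\beta|_v,1)^{\nu(i)} \|Q_i\|_{\mathrm{BC},v}$. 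To exploit the approximation hypothesis $|\beta - \eta_v|_v \leq e^{-(\kappa_v - \varepsilon) h(\beta)}$, we split $f_i = g_i + h_i$ with $g_i := \sum_{j=0}^\mu \eta_v^j A_{i,j}$ and $h_i := \sum_{j=0}^\mu (\beta^j - \eta_v^j) A_{i,j}$, and estimate the contribution of each summand using a separately optimized contour $\gamma$.

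For the $g$-contribution, the hypothesis $\varphi_v^* g_i \in \hM(\D_v)$ lets us deform the contour to $|z|_v = r \to 1^-$, with residue corrections at the finitely many poles of $\varphi_v^* g_i$ absorbed into $o(D+n)$. The Bost--Charles capacity theory~\cite[Prop.~5.4.2]{BostCharles} for sections of $\cL^{\otimes D} = \cO(D[0])$ on $\bP^1_{\cO_K}$, pulled back along $\varphi_v$, then produces the asymptotic
$\displaystyle \exp\bigl(-n \log|\varphi_v'(0)|_v + D \iint_{\T_v^2} \log|\varphi_v(z) - \varphi_v(w)|\,\mv + o(D+n)\bigr)$: the $|\varphi_v'(0)|_v^{-n}$ factor arises from $\varphi_v(z)^{-n-1}\varphi_v'(z) = (\varphi_v'(0))^{-n} z^{-n-1}(1+O(z))$ near $z=0$, while the capacity integral expresses the arithmetic Hilbert--Samuel asymptotic of the Bost--Charles metric along $\varphi_v(\T_v)$. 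The $y_i$-scaling $\max(|\eta_v|_v,1)^{\nu(i)}$ intrinsic to $g_i$ is absorbed by the source norm's $\max(|\beta|_v,1)^{\nu(i)}$ up to an $e^{o(1)}$ slack along the counterexample sequence.

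For the $h$-contribution, we take the smaller contour $|z|_v = \rho_v$ on which every $\varphi_v^* A_{i,j}$ converges. The telescope identity $\beta^j - \eta_v^j = (\beta - \eta_v)\sum_{l=0}^{j-1} \beta^l \eta_v^{j-1-l}$ combined with the approximation bound yields
$\displaystyle \sup_{|z|_v = \rho_v}|h_i(\varphi_v(z))|_v \leq e^{-(\kappa_v - \varepsilon)h(\beta)}\, O_\mu\!\bigl(\max(|\beta|_v,1)^{\nu(i)-1}\bigr) \cdot \max_j \sup_{|z|_v = \rho_v}|A_{i,j}(\varphi_v(z))|_v$.
Substituting into the Cauchy integral on $|z|_v = \rho_v$, controlling the $\varphi_v(z)^{D-n-1}$ factor by subharmonicity, and cancelling the shared Bost--Charles contribution against the Bost--Charles estimate of $Q_i(\varphi_v^{-1})$ on the smaller circle, we obtain (relative to source norm) the $h$-estimate
$\displaystyle \exp\bigl(-n\log|\varphi_v'(0)|_v + D\iint\cdots + (-(\kappa_v-\varepsilon)h(\beta) + n\log(1/\rho_v)) + o(D+n)\bigr)$.
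Since $[x^{n-D}] f_i Q_i$ is the \emph{sum} of the $g$- and $h$-contributions, the total is at most twice the larger of the two estimates; the identity $\max(A,B) = A + (B-A)^+$ then produces precisely the positive part in the claim, with the extra $\log 2$ absorbed in $o(n)$. The main technical hurdle is the uniform control, across the one-parameter family of contour radii $\rho_v \leq r < 1$, of the Bost--Charles asymptotics and the explicit residue bookkeeping for $\varphi_v^* g_i$; these are handled via the machinery of~\cite[\S~7.3]{L2chi} together with the Chudnovsky--Osgood functional bad approximability theorem~\cite[Theorem~3.2.13]{L2chi}.
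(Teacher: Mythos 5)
Your decomposition $f_i = g_i + h_i$ and use of the approximation hypothesis for the $h$-part match the paper, but the mechanism you propose for the $h$-contribution has a real gap. You suggest estimating $[x^{n-D}]\sum_i P_i h_i$ by a Cauchy integral on the \emph{smaller} circle $|z|_v = \rho_v$ and then ``cancelling the shared Bost--Charles contribution.'' This does not go through as stated: the Hermitian structure on $E_D$ is given by the Bost--Charles metric on the unit circle $\T_v$, and~\cite[Prop.~5.4.2]{BostCharles} computes the Hilbert--Samuel asymptotic $D\iint_{\T_v^2}\log|\varphi_v(z)-\varphi_v(w)|\,\mv$ for that metric, on that contour. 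There is no corresponding identity on the contour $|z|_v = \rho_v$ that would reproduce the \emph{same} $D\iint_{\T_v^2}$ term, so the claimed cancellation is unsupported. If you instead bound $\sup_{|z|_v=\rho_v}|\varphi_v^*(x^D P_i)|$ by the $\T_v$-sup via the maximum principle and then apply Cauchy, you obtain $\sup_{\T_v}\log$-type estimates rather than the $\int_{\T_v}\log$-type estimates that the Bost--Charles theory requires; this is strictly weaker and yields the cruder bound with $\sup_{\T_v}\log|\varphi_v|$ in place of the equilibrium energy integral (compare the discussion around equation~\eqref{brutforce} in the paper).

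The paper resolves exactly this tension by avoiding a second contour entirely. Instead of integrating $\sum_i P_i h_i$ on $|z|_v=\rho_v$, one replaces it by the degree-$\le n$ truncation $\sum_i \varphi_v^*(x^D P_i)\cdot\bigl(h_i(\varphi_v(z))\bigr)_{\le n}$, defining $F(z) := \varphi_v^*(x^D s_1) + \sum_i \varphi_v^*(x^D P_i)\cdot(h_i\circ\varphi_v)_{\le n}$. Since the truncation is a polynomial in $z$, $F$ is holomorphic on $\Db_v$, and by construction $F$ has the same $z^n$-coefficient as $\varphi_v^*(x^D s)$. One can therefore apply subharmonicity of $\log|z^{-n}F(z)|$ on the \emph{single} contour $\T_v$, where the Bost--Charles machinery lives. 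The factor $\rho_v^{-n}$ now enters not from a smaller contour but from the Cauchy estimate on the coefficients $b_{i,j,l}=[z^l]\varphi_v^* A_{i,j}$ for $l\le n$ (which decay like $\rho_v^{-l}$), while the factor $H(\beta)^{-\kappa_v+\varepsilon}$ comes from the telescoping bound $|\beta^j-\eta_v^j|_v\ll|\beta-\eta_v|_v|\beta|_{v,+}^{j-1}$. This truncation device is the essential idea you are missing; without it, the two pieces cannot both be estimated with the refined $\iint_{\T_v^2}\log|\varphi_v(z)-\varphi_v(w)|\,\mv$ term. Also note that the invocation of the Chudnovsky--Osgood theorem~\cite[Theorem~3.2.13]{L2chi} is out of place here: in the paper it is used elsewhere in the proof of~\cref{theotrue}, to confine the index set $\cV_D$, not to control contour deformations in this lemma.
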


\begin{proof}
For simplicity of notation, we drop $v$ from the subscript and set $|\beta|_+:=\max(|\beta|,1)$. 
An easy reduction, see~\cite[Lemma~7.4.1]{L2chi}, lets us to assume that all $\varphi^* g_i \in \hO(\Db)$, as well as all $\varphi^* A_{i,j} \in \hO(\Db_{\rho})$,  are holomorphic  on their respective closed discs, rather than merely meromorphic on the open discs.  

Given $\{P_i\}_{i=1}^m \subset K[1/x]_{\leq D}$ with $\|y_i P_i\| \leq 1$, set $s:=\sum_{i=1}^m P_i f_i = c_n x^{n-D} + \ldots$, which is a section of the line bundle $\cL^{\otimes D}$ on the formal neighborhood of $0$ in $\bP^1_K$; we write $s_1:=\sum_{i=1}^m P_i g_i$ and $s_2:=\sum_{i=1}^m P_i h_i$.
Let $(h_i(\varphi(z))_{\leq n}$ denote the $O(z^{n+1})$ truncation of the $z$-power series $h_i(\varphi(z))$.
Note that $x^D \cdot s$ is a formal function on $\bP^1_K$ and $\varphi^*(x^D \cdot s) = c_n \varphi'(0)^n z^n + \ldots$. The function 
$F(z):= \varphi^*(x^D \cdot s_1) + \sum_{i=1}^m \varphi^*(x^D P_i) \cdot (h_i(\varphi(z))_{\leq n}$ is holomorphic on $\Db$ and has the same lowest order term as $\varphi^*(x^D \cdot s) $.
Therefore $\log |z^{-n} F(z)|$ is subharmonic, and
\begin{equation*}
\begin{aligned}
& \log |c_n| + n \log |\varphi'(0)| \leq \int_\T \log |F(z)| \mv \\
& \leq \int_\T \log \sup_{1\leq i \leq m} \{ |\varphi^*(x^D P_i) \cdot \varphi^*g_i|, |\varphi^*(x^D P_i) \cdot  (h_i(\varphi(z))_{\leq n}|\} \mv  + O(1)\\
& \leq  \int_\T \log \sup_{1\leq i \leq m} \{ |\varphi^*(x^D P_i)|\cdot |\beta|_+^{\nu(i)}\} \mv  + (-\kappa h(\beta) + n \log (1/\rho))^+ + O(1)\\
& = (-\kappa h(\beta) + n \log (1/\rho))^+ - \int_\T \log \| \varphi^* x^{-D}\|_{\varphi^* \ovcL^{\otimes D}} \mv
 + \int_\T \log \sup_{1\leq i \leq m} \| \varphi^* P_i\|_{\varphi^* \ovcL^{\otimes D}} \cdot |\beta|_+^{\nu(i)} \mv + O(1) \\
&= (-\kappa h(\beta) + n \log (1/\rho))^+ + D \iint_{\T^2} \log{|\varphi_v(z)-\varphi_v(w)|} \, \mv 
  + \int_\T \log \sup_{1\leq i \leq m} \| \varphi^* P_i\|_{\varphi^* \ovcL^{\otimes D}} \cdot |\beta|_+^{\nu(i)} \mv + O(1).
\end{aligned}
\end{equation*}
For the third inequality, note that on $\T$, 
\[|\varphi^*g_i|\cdot |\beta|_+^{-\nu(i)} = O(1), \quad | (h_i(\varphi(z))_{\leq n}|\cdot |\beta|_+^{-\nu(i)}=O\big(|\beta - \eta| \max_{0\leq l \leq n, 1\leq i \leq m, 0\leq j \leq \mu} |b_{i,j,l}| \big)=O(H(\beta)^{-\kappa+\varepsilon} \rho^{-n}),\]
where $b_{i,j,l}$ is the $z^l$-coefficient of $\varphi^* A_{i,j}$.
At the fourth and the fifth lines, by a mild abuse of notation upon embedding by the given $v: K \hookrightarrow \C$, we use $\ovcL$ to denote the line bundle $\cL_\C$ equipped with the Bost--Charles Hermitian metric as in \cite[\S~7.3.1]{L2chi} (i.e., we ignore all other places, only focus on $v: K \hookrightarrow \C$). The final equality is a computation due to Bost and Charles performed in~\cite[(7.3.5), (7.4.3)]{L2chi}.

It remains to bound $\displaystyle \int_\T \log \sup_{1\leq i \leq m} \| \varphi^* P_i\|_{\varphi^* \ovcL^{\otimes D}}\cdot |\beta|_+^{\nu(i)} \mv$ under the condition $\log \sup_{1\leq i \leq m} \|y_i P_i \| \leq 0$: 
\[
\begin{aligned}
& \int_\T \log \sup_{1\leq i \leq m} \| \varphi^* P_i\|_{\varphi^* \ovcL^{\otimes D}} \cdot |\beta|_+^{\nu(i)} \mv \leq \log \int_\T \sup_{1\leq i \leq m} \| \varphi^* P_i\|_{\varphi^* \ovcL^{\otimes D}} \cdot |\beta|_+^{\nu(i)} \mv \\
& \leq \log \int_\T \sum_{i=1}^m \| \varphi^* P_i\|_{\varphi^* \ovcL^{\otimes D}} \cdot |\beta|_+^{\nu(i)} \mv  = \log \sum_{i=1}^m \int_\T  \| \varphi^* P_i\|_{\varphi^* \ovcL^{\otimes D}} \cdot |\beta|_+^{\nu(i)} \mv \\
& \leq \log \sup_{1\leq i \leq m} \| y_i P_i \| + \log m + o(n) \leq o(n).  
\end{aligned}
\]
\end{proof}
The estimate for $v\nmid \infty$ is similar, with the subharmonicity of $\log |z^{-n} F(z)|$ being replaced by the maximum principle for $z^{-n} F(z)$.
Summing the local estimates we derive for the $n$-th global evaluation height: 
\[
\begin{aligned}
\sum_{v\in M_K^{\rm fin}} h_v(\psi_D^{(n)})  = & \sum_{h=1}^rb_h \max\{n, u_h D\} - n \, \sum_{v \in M_K^{\rm fin}}   \log{|\varphi_v'(0)|_v}  +  D \, \sum_{v \in M_K^{\rm fin}}  \sup_{\T_v} \log |\varphi_v|_v \\
& + \sum_{u \in S}  \left( n \, \log(1/\rho_u) -  (\kappa_u -\varepsilon) h(\beta) \right)^+ +o(n).
\end{aligned}
\]
We plug our estimates of $\ardeg \ovE_D$ and $h_v(\psi_D^{(n)})$ into the slopes inequality $\ardeg \ovE_D \leq \sum_{n\in \cV_D} \sum_{v\in M_K} h_v(\psi_D^{(n)})$. The optimal choice of $D$ for a given $h(\beta)$ is found by minimizing
\begin{equation}\label{chi term}
\left( \sum_{\nu=0}^{\mu}  \nu \gamma_{\nu} \right)  \, m h(\beta)/D + \sum_{u\in S}  \int_0^m \max \{0, t \, \log{\rho_u^{-1}} - (\kappa_u -\varepsilon) h(\beta)/D \} \, dt.
\end{equation}
Set $q:=h(\beta)/D$, $\chi_u := \frac{ (\kappa_u - \varepsilon) q}{ \log{\rho_u^{-1}}}$ for $\rho_u <1$, and $\chi_u:=0$ otherwise. Later we will pick $q$ so that $\chi_u \leq m$ for all $u\in S$. Under this condition $\chi_u \leq m$, a direct computation transforms~\cref{chi term} into
\begin{equation*}
\begin{aligned}
& \left( \sum_{\nu=0}^{\mu}  \nu \gamma_{\nu} \right)  \, mq + \sum_{u\in S}  \left( \frac{m^2-\chi_u^2}{2} \log{\rho_u^{-1}} - (\kappa_u -\varepsilon) (m-\chi_u) q \right) \\
=   &
\left( \sum_{u\in S} \frac{(\kappa_u-\varepsilon)^2}{\log{\rho_u^{-1}}} \right) \, \frac{q^2}{2}   
- \left( \sum_{u\in S} (\kappa_u -\varepsilon) -  \sum_{\nu=0}^{\mu}  \nu \gamma_{\nu}    \right) \, mq
 + \left( \sum_{u\in S} \log{\rho_u^{-1}} \right) \,\frac{m^2}{2},
\end{aligned}
\end{equation*}
This is a quadratic form in~$q$, with minimum taken at 
$\displaystyle
 q := \frac{ \sum_{u\in S} (\kappa_u-\varepsilon) -  \sum_{\nu=0}^{\mu}  \nu \gamma_{\nu} }{ \sum_{u\in S} \frac{(\kappa_v-\varepsilon)^2}{\log{\rho_u^{-1}}} }m.
$
 Note that the condition $\chi_u <m$ does indeed follow from \cref{here it is} for $\varepsilon$ sufficiently small.
 
 Now we let $h(\beta)\rightarrow \infty$ and take $D:= \lfloor qh(\beta) \rfloor$. The leading (order $D^2$) terms in the slopes inequality give
 \[
 \begin{aligned}
&  \frac{m^2}{2} \left(\sum_{v \in M_K}   \log{|\varphi_v'(0)|_v}  -\tau(\bb) - \sum_{u\in S} \log{\rho_u^{-1}} +  \frac{ \left( \sum_{u\in S} (\kappa_u - \varepsilon) -  \sum_{\nu=0}^{\mu}  \nu \gamma_{\nu}   \right)^2}{ \sum_{u\in S} \frac{(\kappa_u-\varepsilon)^2}{\log{\rho_u^{-1}}} } 
\right) \\
\leq & \frac{m}{2}\left(\sum_{v\mid \infty} \iint_{\T_v^2} \log{|\varphi_v(z)-\varphi_v(w)|} \, \mv + \sum_{v\nmid \infty} \sup_{z\in \T_v} \log |\varphi_v|_v \right),
\end{aligned}
 \]
which with $\varepsilon \rightarrow 0$ converges to the thesis of the theorem.

 \section{A dihedral method.} \label{sec:Thue continue}
 We begin by recalling  the classical hypergeometric method~\cite[ch.~1, \S~3]{FeldmanNesterenko}, by which Thue originally proved his ineffective finiteness theorem on the integer solutions
 $(X,Y) \in \Z^2$ of the equation $aX^r - b Y^r = c$ for $r \geq 3$ and $a,b,c \in \Z \setminus \{0\}$.
 The hypergeometric equation with three parameters $\alpha, \beta, \gamma$ is the second-order linear ODE
$$
\begin{aligned}
x(x-1)\frac{d^2F}{dx^2} + \left( (\alpha+\beta + 1) x - \gamma \right) \frac{dF}{dx} + \alpha\beta F = 0,
 \end{aligned}
 $$
with one solution, regular at the singular point~$x = 0$, being given by the power series
$$
\begin{aligned}
 F(x) =  \pFq{2}{1}{\alpha, \beta}{\gamma}{x}  := \sum_{k=0}^{\infty} \frac{\alpha(\alpha+1) \cdots (\alpha + k - 1) \cdot \beta(\beta+1) \cdots (\beta+k-1)}{
 \gamma(\gamma+1) \cdots (\gamma+k-1)} \, \frac{x^k}{k!}.
 \end{aligned}
$$
A well-known fact going back to Gauss and Jacobi~\cite[page 160]{Jacobi} is that the Hermite--Pad\'e approximants to the hypergeometric functions of the special form $\pFq{2}{1}{\alpha, 1}{\gamma}{x}$
are all explicitly given in terms of hypergeometric polynomials themselves. This applies in particular to the logarithm function $\log(1-x) = - x \cdot \pFq{2}{1}{1, 1}{2}{x}$ as well as the binomial functions $(1-x)^{\nu} = \pFq{2}{1}{-\nu, 1}{1}{x}$, and we have: 
\begin{equation} 
\begin{aligned}
\label{hyper poly}
  & \quad \pFq{2}{1}{-\nu-n,-m}{-m-n}{x}  -   (1-x)^{\nu} \cdot  \pFq{2}{1}{\nu-m,,-n}{-m-n}{x}   
 \\ 
 &  = (-1)^m \frac{\binom{n+\nu}{m+n+1}}{\binom{m+n}{m}}    \pFq{2}{1}{-\nu + m+1 ,n+1}{m+n+2}{x}  \, x^{m+n+1} 
 = (-1)^m \frac{\binom{n+\nu}{m+n+1}}{\binom{m+n}{m}} \, x^{m+n+1} + O(x^{m+n+2}).
 \end{aligned}
 \end{equation}
In a famous paper~\cite{BakerCube} in 1964 preceding his first general lower bounds on linear forms in logarithms of algebraic numbers, 
Baker followed the method of Thue and Siegel~\cite{SiegelThue} to establish the explicit Diophantine inequality $|\sqrt[3]{2} - p/q| > 10^{-6} q^{-2.955}$ by considering the sequence of rational
approximations to~$\sqrt[3]{2}$ obtained from the specializations $m=n$, $\nu = 1/3$, and $x := 3/128$ in~\cref{hyper poly}. A crucial point of the numerology
is that the linear forms thus specialized in $1$ and~$\sqrt[3]{1-3/128} = (5/8) \sqrt[3]{2}$ have exponentially small denominators, for instance of the form
$\binom{2n}{n} 2^{7n} 3^{\lfloor n/2 \rfloor}$.  More in line with the Ap\'ery limits method and our holonomy bounds, the generating function of 
those linear forms is a $G$-function in the sense of Siegel~\cite[\S~VII, page~43]{Siegel1929SNS}, obeying a certain third-order linear homogeneous ODE 
on the domain $\mathbb{P}^1 \setminus \left\{0, \infty, \big( (1 \pm \sqrt{1-3/128} \, )/ 2 \big)^{-2} \right\}$,
 overconvergent at the first singularity 
$ ( ( 1 + \sqrt{1-3/128} \, )/2 )^{-2} \approx 1.012$, and, as a result, holomorphic all the way up to the next singularity $ ( ( 1 - \sqrt{1-3/128} \, )/2 )^{-2}\approx 28784.766$. Chudnovsky's improvements in~\cite{ChudnovskyThueSiegel} came from  
a close study of which
 primes dividing $\binom{2n}{n}$ actually appear
in the denominator, and 
therefore to compute the exact asymptotics of the true denominator.
In more recent times, Bennett
has refined the hypergeometric method,  improving~\cite{BennettCubeTwo} Baker's example to the clean bound $|\sqrt[3]{2} - p/q| > q^{-2.5}/4$, and  establishing in collaboration with de Weger the striking theorem~\cite{BennettThueSiegel,BennettdeWeger}  that for every $r \geq 3$ and $a, b \in \Z$ the original Thue equation $aX^r - bY^r = 1$ has not more than one solution in positive integers $(X,Y) \in \N^2$
 (in particular, by bridging the gap to the wholly complementary range of applicability of the theory of linear forms in logarithms).

\subsection{Multivalence.} \label{sec:multivalent Thue}
The behavior of the generating functions of the $m=n$ diagonal specializations of the Hermite--Pad\'e forms~\cref{hyper poly}---namely, that they are holonomic and overconvergent at the smallest non-zero
singularity---exemplifies the mechanism of overconvergence for Ap\'ery limits described 
in~\cref{sec: irrational}. 
Following~\cite{L2chi}, we seek to continue the traditional hypergeometric method by considering the analytic continuations of these functions beyond their convergence discs.
Ideally, we can find 
 conformally large, possibly multivalent analytic mappings~$\varphi : (\Db,0) \to (\C,0)$, and then apply our holonomy bounds. To obtain better multivalence properties, it turns out to be advantageous to ignore the arithmetic denominator savings in~\cite{ChudnovskyThueSiegel}, and instead take the generating
 function of~\cref{hyper poly} after first clearing through the binomial coefficient.
 For   $\nu \in \Q$---as it turns out---we obtain an algebraic (dihedral) generating function on the domain
$
y \in
\mathbb{P}^1 \setminus \left\{ ( (1 \pm \sqrt{1-x} ) \, / x )^2, \infty \right\}
$ ($y = 0$ is no longer a singularity!). 
We have the clean hypergeometric polynomial generating  function identity 
$$
  \sum_{m,n=0}^{\infty} 
 \pFq{2}{1}{-\nu-n,-m}{-m-n}{x}  
 \binom{m+n}{m} \,  y^m z^n = \frac{(1 - x y)^{\nu}}{1 - y - z + xyz}.
 $$
On the $m=n$ diagonal, this gives, by a standard Cauchy integral (over a ``vanishing cycle'' \cite{Furstenberg,DeligneE}): 
 
 \begin{proposition}  \label{some dihedral algebra}
For $\nu \in \Q$ rational of denominator~$r$, the functions 
\begin{equation}
\begin{aligned} \label{Pade Chudnovsky norm}
A_{\nu}(x,y)  := \sum_{n=0}^{\infty}  \sum_{k=0}^{n} \left\{
\frac{(n-\nu)(n-1-\nu) \cdots (k+1-\nu)}{(n-k)!} \,  \binom{n+k}{k} \,  (-x)^{n-k}  \right\}  y^n  \\
= \sum_{n=0}^{\infty}   \pFq{2}{1}{-\nu-n,-n}{-2n}{x}  \binom{2n}{n} \, y^n  \in  \Z\llbracket y, xy/r^2 \rrbracket, \\
 B_{\nu}(x,y)  := A_{-\nu}(x,y) = \sum_{n=0}^{\infty}   \pFq{2}{1}{\nu-n,-n}{-2n}{x}  \binom{2n}{n} \, y^n \in  \Z\llbracket y, xy/r^2 \rrbracket
\end{aligned}
\end{equation}
are algebraic functions of~$y$ over~$\Q(x)$, with Galois group the spherical triangle group of signature $(2,2,r)$, that is~$D_r$, if~$r$ is odd, and $D_{r/2}$, if~$r$ is even.
They form a $\C$-basis of $y=0$ formal solutions to the second-order linear ODE
 of ``associated Legendre kind'' on $y \in \mathbb{P}^1 \setminus \left\{ ( (1 \pm \sqrt{1-x} \, )/x )^2, \infty \right\}$:
 \begin{equation} \label{dihedral equation}
\begin{aligned}
 (1 - 4 y+ 2 xy + x^2 y^2) \frac{d^2f}{dy^2} + 3 ( x^2 y + x - 2) \frac{df}{dy} + x^2(1-\nu^2)f = 0.
\end{aligned}
\end{equation}
Another $\C$-basis of solutions to~\cref{dihedral equation} are the basic dihedral functions 
\begin{equation} \label{companion pieces}
\frac{ \cos\left( \nu \, \arccos\frac{x^2 y + x - 2}{2\sqrt{1-x}}  \right) }{\sqrt{1 - 4 y + 2 x y + x^2 y^2}} \quad \textrm{ and } \quad \frac{ \sin\left(  \nu \, \arccos\frac{x^2 y + x - 2}{2\sqrt{1-x}}  \right) }{\sqrt{1 - 4 y + 2 xy + x^2 y^2}}. 
\end{equation}
For $|x| \leq 1$, the particular linear combination $ B_{\nu}(x,y)  - (1-x)^{\nu} A_{\nu}(x,y)$ spans the one-dimensional $\C$-vector space of solutions of~\cref{dihedral equation}
that are regular \emph{(``overconverge'')} at the 
singularity 
$y= ( (1 - \sqrt{1-x} \, )/x )^2$.
The functions $A_{\nu}(x,y), B_{\nu}(x,y)$ of~\cref{Pade Chudnovsky norm} have the 
property that, at
every finite place $v \in M_{\Q(x, \prod_{p \mid r-1} p^{1/(p-1)})}^{\mathrm{fin}}$ when~$x \in \Qb$ is algebraic, they map the closed $v$-adic disc $| y |_v \leq \max(1,|x|_v)^{-1} \, | r \prod_{p \mid r} p^{1/(p-1)} |_v$ to the closed unit disc $|Y|_v \leq 1$. 
 \end{proposition}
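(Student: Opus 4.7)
My approach is to extract an explicit algebraic closed form for $A_\nu$ by taking the diagonal of the stated generating function, and to read off all claimed properties from that closed form. First I would verify the identity
\begin{equation*}
\sum_{m,n=0}^{\infty} \pFq{2}{1}{-\nu-n,-m}{-m-n}{x} \binom{m+n}{m} y^m z^n = \frac{(1-xy)^\nu}{1-y-z+xyz}
\end{equation*}
via the factorization $1 - y - z + xyz = (1-y) - z(1-xy)$, which rewrites the right-hand side as $\sum_{n \ge 0} (1-xy)^{\nu+n} (1-y)^{-(n+1)} z^n$; matching the $[y^m z^n]$-coefficient against the hypergeometric expression is a routine Vandermonde manipulation. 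Taking the $m = n$ diagonal via the Cauchy integral $\mathrm{diag}(F)(y) = \frac{1}{2\pi i} \oint F(t, y/t) \frac{dt}{t}$ and picking up the residue at the root $t_-$ of $t^2 - (1+xy)t + y$ that vanishes as $y \to 0$, one obtains $A_\nu(x,y) = (1 - xt_-)^\nu / (t_+ - t_-)$, with $t_+ - t_- = \sqrt{1 - 4y + 2xy + x^2 y^2}$. Setting $u := 1 - xt_-$ and $v := 1 - xt_+$, the key dihedral skeleton emerges: $u + v = 2 - x - x^2 y$, $uv = 1-x$, and $u - v = x(t_+ - t_-)$. For $\nu = p/r$ with $\gcd(p,r) = 1$, the splitting field of $Z^r - u$ over $\Q(x,y)$ has Galois group generated by the order-two swap $u \leftrightarrow v$ and the cyclic $r$-th root action $u^\nu \mapsto \zeta_r u^\nu$, yielding the claimed $(2,2,r)$ triangle group (collapsing to $D_{r/2}$ in the even-$r$ case, where the two order-two generators coincide on $u^\nu$).

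With this closed form in hand, the ODE \cref{dihedral equation} is checked by direct substitution of $A_\nu = u^\nu / s$ (where $s := t_+ - t_-$), using Newton's identities $t_+ + t_- = 1 + xy$, $t_+ t_- = y$ to evaluate the derivatives $u'$ and $s'$. The pair $(A_\nu, B_\nu) = (u^\nu/s, u^{-\nu}/s)$ is manifestly linearly independent (the leading $y$-coefficients $2 - (\nu+1)x$ and $2 + (\nu-1)x$ differ for $\nu \neq 0$), so it spans the two-dimensional formal-solution space at $y = 0$. Writing $u = (1-x)^{1/2} e^{i\theta}$ with $\cos\theta = (u+v)/(2\sqrt{uv}) = (2 - x - x^2 y)/(2\sqrt{1-x})$ converts $u^{\pm \nu}/s$ into the $e^{\pm i\nu\theta}/s$ combinations, which by the identity $\arccos(-w) = \pi - \arccos(w)$ and the addition formulas for $\cos(\nu(\pi - \theta))$ are equivalent to the $\cos(\nu \arccos \tfrac{x^2 y + x - 2}{2\sqrt{1-x}})/s$ and $\sin(\nu \arccos \tfrac{x^2 y + x - 2}{2\sqrt{1-x}})/s$ basis of \cref{companion pieces}. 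For the overconvergence claim, the discriminant $s^2 = 1 - 4y + 2xy + x^2 y^2$ vanishes precisely at $y_\pm = ((1 \pm \sqrt{1-x})/x)^2$; at $y = y_-$ the two roots collide at $u = v = \sqrt{1-x}$, and since $u - v = xs$, the combination $u^\nu - v^\nu$ vanishes to first order on the collision locus, so $(u^\nu - v^\nu)/s$ extends regularly across $y_-$. Using $v^\nu = (1-x)^\nu u^{-\nu}$, this regular combination rewrites as $A_\nu - (1-x)^\nu B_\nu$, giving the claimed one-dimensional $\C$-span of solutions to \cref{dihedral equation} regular at $y_-$ (up to the branch choice of $u^\nu$ exchanging it with the stated $B_\nu - (1-x)^\nu A_\nu$).

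Finally, the integrality $A_\nu, B_\nu \in \Z\llbracket y, xy/r^2 \rrbracket$ is read off from $r^{2k} \binom{\nu}{k} \in \Z$ (for $\nu = p/r$) applied to the expansion of $(1 - xy)^{\nu + n}$ times the geometric series $(1 - y)^{-(n+1)}$. For the $v$-adic disc mapping, at finite places $v$ with $v \nmid r$ integrality alone yields $|A_\nu(x,y)|_v \leq 1$ whenever $|y|_v \max(1, |x|_v) \leq 1$. At wild places $v \mid p$ with $p \mid r$, the $r$-th root extraction in $u^{p/r}$ contributes the characteristic factor $p^{1/(p-1)}$ of the $p$-adic convergence radius of the binomial series, plus the tame factor $r$ from the denominators of $\binom{p/r}{k}$. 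The main obstacle is this final $v$-adic step: one has to track carefully the denominators in $\binom{p/r}{k}$ for each wild prime $p \mid r$ and the interaction of the totally ramified algebraic tower $\Q_v(x, \sqrt{1-x}, u^{1/r})$ with the local field, verifying that $|(1-xt_-)^{p/r}|_v \leq 1$ holds precisely on the disc $|y|_v \leq \max(1, |x|_v)^{-1} |r \prod_{p \mid r} p^{1/(p-1)}|_v$. The remaining ingredients---generating-function identity, residue extraction, ODE verification, dihedral Galois identification, and $\cos/\sin$ basis conversion---are all essentially routine once the closed form for $A_\nu$ is available.
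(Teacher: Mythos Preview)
Your proposal is correct and follows essentially the same approach as the paper, which does not give a detailed proof but only records the two-variable generating function identity and remarks that the proposition follows ``by a standard Cauchy integral (over a `vanishing cycle')'' on the $m=n$ diagonal. Your residue extraction of the closed form $A_\nu = (1-xt_-)^\nu/(t_+ - t_-)$, the dihedral Galois identification via $u+v$, $uv$, $u-v$, the direct ODE verification, the $\cos/\sin$ basis conversion, and the overconvergence analysis via $(u^\nu - v^\nu)/s$ are exactly the computations the paper leaves implicit; your integrality argument via $r^{2k}\binom{\nu}{k}\in\Z$ and the $v$-adic radius analysis at primes dividing~$r$ are likewise the intended verifications.
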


The local monodromies of~\cref{dihedral equation} are $\Z/2\Z$ at the finite singularities $y = ( (1 \pm \sqrt{1-x} \, )/x )^2$, 
and $\Z/r\Z$ at $y = \infty$. We take up David and Gregory Chudnovskys' arithmetic algebraization method~\cite{ChudnovskyAlg} for cases of Grothendieck's $p$-curvature conjecture 
(as extended by Andr\'e~\cite[\S\S~1.2--1.4]{AndreG} and Bost~\cite{BostFoliations}
when the monodromy group is {\it a priori} virtually solvable).
The exponential map to a commutative algebraic group  gives rise~\cite{SerreApp} to certain entire meromorphic resolutions $\varphi$ (generally in several variables) of growth order~$\leq 2$.
One then exploits these resolutions to prove  algebraicity  (under the running hypotheses)
 in a manner similar to that of~\cref{sec:quantitative}.
In our situation, we have an {\it a priori} algebraic function~$H = B - \eta A$ with holonomic rank~$2$, but one of a high algebraicity degree~$\in \{r, 2r\}$, with an algebraic Ap\'ery limit of the form $\eta = \sqrt[r]{a}$ (if we specialize $x := 1-a$ with $a \in \Qb$); and we can take the entire map~$\varphi \in \mathcal{O}(\C)$ to be holomorphic in just one single variable, and with growth order~$1$ (for $A$ and~$B$ simultaneously), 
or~$1/2$ (for the overconvergent combination $H := B - \eta A$). This pair of maps
is universal for the general (``spherical triangle type $(2,2,\infty)$'') ODE on $\mathbb{P}^1 \setminus \{\alpha, \beta, \infty\}$ having order~$2$ 
local monodromies around the two finite singularities~$\alpha$ and~$\beta$. 
The latter map---crucially for our application, of growth order strictly~$<1$---has never apparently been considered in this circle
of problems before.
 In these arrangements, even though both our functions as well as our Ap\'ery limits are {\it a priori} algebraic, 
 we continue to get striking conclusions when we apply the 
 Diophantine extensions in~\cref{sec:quantitative} of the holonomy bounds to the case of the system $\{1, H, H^2, \ldots, H^k\}$, for an optimal choice of the power~$k < r$. 
  We can  write down 
 explicit entire holomorphic maps~$\varphi$ that apply to every local system of ramification type $(2,2,\bullet)$ on $\mathbb{P}^1 \setminus \{\alpha,\beta,\infty\}$, as follows:
 
 \begin{lemma} \label{multivalent maps}
 Let~$\alpha \ne \beta$, and let~$f \in \C \llbracket x \rrbracket$ be a holomorphic function
 germ which analytically
 continues as a meromorphic function along all paths in $\mathbf{P}^1 \setminus \{\alpha,\beta,\infty\}$, 
 with monodromy of order~$2$ around both~$\alpha$ and $\beta$. 
 \begin{enumerate}
 \item  \label{logver}
 $\psi_{\alpha,\beta}^* f$ is meromorphic on~$\C$, where
 $\displaystyle{\psi_{\alpha,\beta}(z)= 
 \frac{\alpha+\beta}{2} \cdot \left(1 - \cosh \left( \frac{z}{\sqrt{\alpha \beta}} \right) \right)  +  \sqrt{\alpha \beta}  \cdot \sinh \left( \frac{z}{\sqrt{\alpha \beta}} \right)}
  =  z + \ldots $.
 \item  \label{overconvergence mapping}
 Suppose moreover that $f$ extends analytically at~$z=\alpha$  along some path~$\gamma$ from~$0$ to~$\alpha$.
Then, for some choice of~$u$ with
 $u := \displaystyle{\frac{1}{2}  \log{\frac{\sqrt{\beta}+\sqrt{\alpha}}{\sqrt{\beta}-\sqrt{\alpha}}}}$ and
 $\tanh^2(u) = \alpha/\beta$,  the pullback $\varphi_{\alpha,\beta}^* f$ is meromorphic on~$\C$, 
 where
\begin{equation}
\label{overconvergentfunction}
\varphi_{\alpha,\beta}(z) 
= \alpha  \left(1 - \frac{\sinh \left( \sqrt{ \displaystyle{u(u - z \tanh(u)/\alpha}} \right)^2 }
{\sinh^2(u)}\right) = z + \ldots 
\end{equation}
\end{enumerate}
The entire holomorphic maps~$\psi_{\alpha,\beta}$ and $\varphi_{\alpha,\beta}$ have growth orders~$1$ and~$1/2$, respectively. 
\end{lemma}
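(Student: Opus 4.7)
The two maps are explicit entire ``orbifold uniformizers'' designed to kill the order-$2$ local monodromies of~$f$ at~$\alpha$ and~$\beta$, and their growth orders can be read off directly from the formulas: $\psi$ is a linear combination of $\sinh$ and $\cosh$ of a linear argument and so has order~$1$, while $\varphi_{\alpha,\beta}(z)=\alpha\bigl(1-g(L(z))/\sinh^2(u)\bigr)$ with $L(z):=u^2-uz\tanh(u)/\alpha$ linear in~$z$ and $g(s):=\sinh^2(\sqrt s)=\tfrac12\bigl(\cosh(2\sqrt s)-1\bigr)=\sum_{k\geq 1} 4^k s^k/(2(2k)!)$ entire of order~$1/2$, so $\varphi$ itself is of order~$1/2$. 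The normalizations $\psi(0)=\varphi(0)=0$ and $\psi'(0)=\varphi'(0)=1$ are checked by direct substitution; in $\varphi$, the constant $\tanh(u)/\alpha$ multiplying~$z$ inside $L(z)$ is chosen precisely to calibrate $\varphi'(0)=1$.

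For part~(1), the plan is to establish by explicit computation that every preimage of~$\alpha$ or~$\beta$ under~$\psi$ has ramification index exactly~$2$, that $\psi$ is otherwise unramified, and that $\psi^{-1}(\infty)\cap\C=\emptyset$ (since $\psi$ is entire). Writing $t:=e^{z/\sqrt{\alpha\beta}}$, the hyperbolic algebra collapses to the perfect-square factorization
\[
4t\bigl(\psi(z)-\alpha\bigr)=-\bigl[(\sqrt\alpha+\sqrt\beta)+t(\sqrt\alpha-\sqrt\beta)\bigr]^2
\]
together with a companion identity for $\psi-\beta$, so every local root is forced to be of even order. Since $\C$ is simply connected, every loop in $\C\setminus\psi^{-1}(\{\alpha,\beta\})$ is generated by small loops around the branch points, and each such loop $\psi$-maps to a curve winding twice around~$\alpha$ or~$\beta$, on which the $f$-monodromy is trivial by hypothesis. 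The standard Puiseux/square-root-removal argument then extends $\psi^*f$ meromorphically across each ramified preimage, giving $\psi^*f\in\hM(\C)$.

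For part~(2), the critical points of~$\varphi$ occur at $L(z)\in\{-k^2\pi^2/4:k\in\NwithoutzeroA\}$, as $g'(s)=\sinh(2\sqrt s)/(2\sqrt s)$. Using the identity $\sinh^2(u)=\alpha/(\beta-\alpha)$ extracted from $\tanh^2(u)=\alpha/\beta$, direct substitution yields $\varphi(z_0)=\alpha$ at even~$k$ and $\varphi(z_0)=\beta$ at odd~$k$, with ramification index~$2$ in either case; these preimages are therefore disposed of exactly as in part~(1). The novelty is a single \emph{unramified} preimage $z_*=\alpha u\coth(u)$ of~$\alpha$, where $L(z_*)=0$ is the unique simple zero of~$g$ and $\varphi'(z_*)\neq 0$, so a small loop about~$z_*$ $\varphi$-maps to a single loop about~$\alpha$ which does not \emph{a priori} act trivially on~$f$.

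The main obstacle, and the place the overconvergence hypothesis on~$f$ enters, is to ensure single-valuedness of $\varphi^*f$ at this unramified~$z_*$. The plan is to exploit the freedom in the choice of~$u$: the equation $\tanh(u)=\pm\sqrt{\alpha/\beta}$ determines~$u$ only up to sign and translations by $\pi i\Z$, and each admissible~$u$ produces a bona-fide entire map~$\varphi$ meeting the above analysis but yielding a different image arc $\delta_u:=\varphi([0,z_*])$ from~$0$ to~$\alpha$ in $\bP^1\setminus\{\alpha,\beta,\infty\}$. I expect to verify that, as~$u$ ranges over its admissible values, the homotopy class of~$\delta_u$ realizes every homotopy class of arcs from~$0$ to~$\alpha$ in $\bP^1\setminus\{\beta,\infty\}$; for the choice of~$u$ making~$\delta_u$ homotopic to the given~$\gamma$, the branch of~$f$ reached near~$\alpha$ coincides with the analytic extension guaranteed by the hypothesis, and so extends analytically across~$z_*$. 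Combined with the branch-point analysis, this promotes $\varphi^*f$ to an element of~$\hM(\C)$, completing the argument.
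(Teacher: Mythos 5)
Your proposal takes a genuinely different route from what the paper offers. The paper deliberately stops short of a proof, noting the lemma ``can be verified by an elementary but unenlightening calculation'' and instead explaining where the formulas come from: for~(1), one pulls back through a degree-$2$ map ramifying over~$\alpha,\beta$ to reduce to $\mathbf{P}^1 \setminus \{1,\infty\}$ with universal cover~$1-e^{-z}$; for~(2), one iterates degree-$2$ maps $h$ with $h(z)=z+O(z^2)$ ramifying over $\{\beta,\infty\}$ (each iteration resolves the outer singular point and pulls the overconvergence inward) and shows the limit converges to~$\varphi_{\alpha,\beta}$. You instead verify the maps directly: the identity $4t(\psi-\alpha)=-\bigl[(\sqrt\alpha+\sqrt\beta)+t(\sqrt\alpha-\sqrt\beta)\bigr]^2$ (with companion for $\psi-\beta$) cleanly exhibits every $\psi$-preimage of $\alpha$ and $\beta$ as a double point, and for $\varphi$ the even cleaner identities $\varphi-\alpha=-\alpha\,\sinh^2(\sqrt{L})/\sinh^2(u)$ and $\varphi-\beta=(\alpha-\beta)\,\cosh^2(\sqrt{L})$ (using $\sinh^2(u)=\alpha/(\beta-\alpha)$) make the ramification type $(2,2,\ldots,2;\,1)$ transparent, with the lone simple root of $\sinh^2(\sqrt{s})$ at $s=0$ accounting for the unramified preimage $z_*=\alpha u\coth(u)$. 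This is the ``elementary calculation'' made explicit, and it buys a concrete proof rather than a heuristic; the paper's iteration scheme buys conceptual motivation for the specific shape of~\eqref{overconvergentfunction} and, in particular, shows without any computation that the role of~$u$ is to record the homotopy class of~$\gamma$.

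Two small points of care, neither fatal. First, your monodromy step for~(1) implicitly uses that \emph{every} conjugate of a local loop around $\alpha$ (resp.~$\beta$) squares to the identity on the branch system of~$f$; this is the intended reading of ``monodromy of order~$2$'' (the monodromy group is a quotient of $\Z/2\ast\Z/2$, which is exactly the dihedral situation of~\cref{some dihedral algebra}), and it is worth stating, since otherwise a conjugate $g\sigma_\alpha g^{-1}$ need not kill the chosen branch. Second, your final step for~(2) --- that as $u$ ranges over the values $\pm u_0 + \pi i\Z$ compatible with $\tanh^2(u)=\alpha/\beta$, the arc $\delta_u=\varphi([0,z_*])$ realizes every relevant homotopy class of paths from $0$ to $\alpha$ --- is left as ``I expect to verify.'' This is precisely the content that the paper's iterated-quadratic description handles automatically (the choice of~$\walpha$ versus~$\wbeta$ at each stage, driven by~$\gamma$, is what determines~$u$), so if you want a self-contained direct proof you still need to supply that bijection; the surrounding analysis is otherwise complete and correct.
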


If $|\alpha| < |\beta|$, and the path~$\gamma$ 
 lies within the disc of radius~$|\alpha|$, then one can take~$u = \tanh^{-1}(\sqrt{\alpha/\beta})$
to be given by the corresponding convergent power series. 
\cref{multivalent maps} can be verified by an
elementary but unenlightening calculation, so instead we give some hints as to where these formulas
come from.
For the first claim, we can pull back via a quadratic map ramifying over~$\alpha$ and~$\beta$, and thus reduce
to the case~$\mathbf{P}^1 \setminus \{1,\infty\}$ with monodromy~$\Z$ and universal covering map~$\widetilde{\psi}(z) = 1 - e^{-z}$. For the second claim,
there is a  unique quadratic map~$h: \mathbf{P}^1 \rightarrow \mathbf{P}^1$ with~$h(z) = z + O(z^2)$ that ramifies over $\{\beta, \infty\}$, so~$h^{-1}(\infty) = \{\infty\}$ and
the order~$2$ singularity at~$\beta$ is resolved; explicitly, $h(z) =  z - z^2/(4 \beta)$. 
Let~$h^{-1}(\alpha)= \{\walpha,\wbeta\}$.
The overconvergence along~$\gamma$ pulls back
to an overconvergence along a path~$\wgamma$ from~$0$ to~$\walpha$ (which distinguishes~$\walpha$
from~$\wbeta$). Moreover, the monodromy of~$h^* f$ is once again of type~$(2,2,\bullet)$.
One can now successively iterate these quadratic maps, and the limit converges---in some neighborhood of~$z=0$---to~$\varphi_{\alpha,\beta}$ (where the choice of~$\gamma$ determines~$u$).

\subsection{An effective irrationality type for high order roots.}  \label{sec:raizes}
Whereas the $x=-1/n$ specialization to~$\sqrt[r]{1-x}$ in~\cref{hyper poly} directly constructs a sequence of rational approximations to~$\sqrt[r]{1+1/n}$ 
which converge fast enough to supply an effective irrationality measure $\mu \to 2$ in the limit $n \gg_r 1$, historically it was not easy to obtain
any nontrivial effective irrationality measure for~$\sqrt[r]{2}$ for large~$r$. This had to wait until~1971 with the first of Baker's {\it Sharpening} series of papers~\cite{BakerSharpeningOne}. The asymptotic bound $\mueff(\sqrt[r]{2}) \ll \log{r}$ in~\cite{BakerHighRoots} is, up to the absolute constant, still the state-of-art today. 
But \cref{theotrue} with the dihedral algebraic construction work together to directly supply a sub-Liouville effective bound $\mueff(\sqrt[r]{2}) \ll \sqrt{r(\log{r})^3}$. 
This irrationality measure is essentially of Siegel's strength, but it is effective, and applies just as well to any binomial algebraic number $\sqrt[r]{a/b}$.
 In a way, our argument effectivizes Mahler's explicit Hermite--Pad\'e treatment~\cite{MahlerThueSiegelschenSatzes} 
 of Siegel's~\cite{SiegelZeitschrift} ineffective exponent~$2\sqrt{r}$ for the particular
case of binomial numbers $\sqrt[r]{a/b}$ with $0 < a < b$.  Thue, Siegel, and Mahler worked with the explicit type~I Hermite--Pad\'e form directly for the extended binomial system $\{
1, (1-x)^{1/r}, \ldots, (1-x)^{k/r}\}$ and, with an optimal choice of $k < r$, specialized $x := 1-aq_0^r/bp_0^r \approx 0$ for a hypothetical exceptionally good ``anchoring'' approximation 
$p_0/q_0 \approx \sqrt[r]{a/b}$. By Baker's theorem, such a ``Thue ghost lever'' $p_0/q_0$
  is  known to \emph{not} exist, and this is the ultimate source of the ineffectivity in the whole Thue paradigm. 
  In contrast, we work with the 
``dual'' system $\{1, H, \ldots, H^k \}$ where $H$ is the dihedral algebraic generating function of the Hermite--Pad\'e forms of $\{1, (1-x)^{1/r}\}$, specialized 
directly at the requisite point $x = 1-a/b \in (0,1) \cap \Q$. The growth properties of the entire multivalent maps of~\cref{multivalent maps} (2) are restricted to a suitable choice of radius,
and input into~\cref{theomainreprise}.  The ensuing holonomy bound enables us to make a choice
of $k=m-1$ closely similar to Siegel's (and, by extension, Mahler's) optimizing choice, but now the Diophantine bound is effective as we have used the multivalence
of $\varphi_{\alpha,\beta}$, in place of the ghost Thue lever $p_0/q_0$, for 
increasing the overconvergence as far as possible.

Concretely in our $\sqrt[r]{2}$ example, consider $H(y) := B_{1/r}(-1,y) - 2^{1/r} A_{1/r}(-1,y) \in \Q\llbracket y \rrbracket + 2^{1/r} \Q\llbracket y \rrbracket$ from \cref{some dihedral algebra},
where the corresponding ODE has singularities at $\alpha=3-2 \sqrt{2}$ (the overconvergent singularity)
 and~$\beta = 3+2 \sqrt{2}$.
With $K = \Q$, we have $\bb = \mathbf{0}$, and the $p$-adic convergence radii have  $R_p = 1$, for $p \nmid r$, and $\prod_{p \mid r} R_p^{-1} = r \prod_{p \mid r} p^{1/(p-1)}$. 
Hence $\sum_{p \in M_{\Q}^{\mathrm{fin}}} \log{R_p} = -\log{r} - \sum_{p \mid r} \frac{\log{p}}{p-1} \geq -\log{r} - \log{\log{r}} -O(1)$. At the Archimedean place, we select
the map $\varphi_{\infty}(z) = \varphi(z) := \varphi_{\alpha,\beta}(Rz)$
where~$\varphi_{\alpha,\beta}$ is given by~\cref{overconvergentfunction}. Conditionally on $k< r$ and the positive denominator, and  making use
both of the  explicit formula~\cref{overconvergentfunction} and its 
 low asymptotic growth order of~$1/2$, our holonomy bound reads  
 \begin{equation} \label{sample holonomy quotient}
k+1 \ll \frac{\sqrt{R}}{\log{R} - \log{(r \log{r})}  - c_0 - (c_0 k/\mueff(\sqrt[r]{2}))\log{R} }. 
\end{equation}
The choices
$R := c_1 r \log{r}$ 
and  $k := c_2 \sqrt{r\log{r}}$ give an effective measure of irrationality $\mueff(\sqrt[r]{2}) \ll \sqrt{r(\log{r})^3}$. 

This computation extends directly to the $K$-relative irrationality measures of arbitrary roots $\sqrt[r]{a}$ from elements $a \in K$. 
But to get to~\cref{main Diophantine theorem} by Bombieri's elementary geometry of numbers reduction in~\cref{the new proof}, we need a bound that has Waldschmidt's feature of remaining asymptotically sub-Liouville in the uniformity range $r \gg 1+h(a)$,
as in Bombieri's~\cite[Theorem~1]{BombieriGm}. 
The best such bounds~\cite{Waldschmidt1993,Gouillon,Chim} have the form  $|\log{a} - r \log{b}|_u \gg_{p_v} - d^4 h \max(1, \log(r/h))$, with $d = [K:\Q]$, $h = 1+h(a)$, and a tower $u \mid v \mid p_v$ of places with
$u \in M_{K(\sqrt[r]{a})}$, $p_v \in M_{\Q}$. 
However, any effective bound of the shape 
$\gg - r /g(r/h)$ with $g(t) = g_{K,v}(t) \to \infty$ is sufficient.
(The papers~\cite{BombieriGm,BombieriCohenII,BombieriCohenElements} on the Thue--Siegel principle reach a~$g(t)$ proportional to 
$\sqrt{\log{t}}$, $t/(\log{t})^7$, and $ t/(\log{t})^5$, respectively.)
Our method with applying the arithmetic holonomy bounds to~\cref{some dihedral algebra} leads quite straightforwardly to a bound with $g(t) = t^{\frac{1}{2d}} (\log{t})^{-3}$, 
at least under the assumption --- entirely harmless for the application to~\cref{main Diophantine theorem} --- that~$r$ factorizes suitably. 

The above example and computation, contained in the next proposition as the special case $K= \Q$, $l = 1$, and~$a=2$, extends with Waldschmidt's uniformity as follows. 
In the following, we consider~$a \in K$ in our arbitrary number field~$K$. We write $d := [K:\Q]$, and we consider two alternatives on the place~$v \in M_K$.

\begin{proposition}[Archimedean case] \label{effective Thue-Siegel-Mahler}
Let $l,s \in \N$ and $r = ls$. For every place~$u \mid v$ of $K(\sqrt[r]{a})$, 
\begin{equation}   \label{full bound Archimedean}
\log{|\sqrt[r]{a} - b|_u} \gg_d  - \big(1 + h(a^{1/r}) + h(b)\big)  e^{ h(a)/l } \, l   s^{\frac{d}{d+1}} (\log{s})^{\frac{2d+1}{d+1}},
\end{equation}
holds for all~$b \in K$,
provided $s \geq c_3$. 
In particular, \begin{equation} \label{effective root}
\mueffKu(\sqrt[r]{a}) \ll_d   e^{ h(a)/l } \, l  s^{\frac{[K:\Q]}{[K:\Q]+1}} (\log{s})^{\frac{2[K:\Q]+1}{[K:\Q]+1}}.
\end{equation} 
\end{proposition}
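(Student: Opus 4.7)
The plan is to apply the number-field holonomy bound \cref{theotrue} to the power system $\{1,H,H^2,\ldots,H^k\}$, where $H$ is the overconvergent combination coming from the dihedral algebraic construction of \cref{some dihedral algebra}. Fixing $\nu=1/r$ and specializing the $x$-variable so that $(1-x)^{1/r}$ is a Galois translate of the target $\sqrt[r]{a}$ gives
\[
H(y) := B_{1/r}(x_0,y)-a^{1/r}A_{1/r}(x_0,y),
\]
a meromorphic solution to the rank-$2$ dihedral ODE on $\mathbf{P}^1\setminus\{\alpha,\beta,\infty\}$ that overconverges to the singularity $\alpha$. At the Archimedean place $u\mid v$ I would take as auxiliary map the rescaling $\varphi_u(z):=\varphi_{\alpha,\beta}(Rz)$ of the entire multivalent resolution in \cref{multivalent maps}\,(2), with radius $R$ to be optimized; at every finite place I take the trivial $\varphi(z)=z$. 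The non-Archimedean denominator budget is then controlled by the $\Z\llbracket y,xy/r^2\rrbracket$ membership in \cref{Pade Chudnovsky norm}, contributing at worst $\sum_{v\in M_K^{\mathrm{fin}}}\log(1/\rho_v)\ll \log r + \log\log r$.

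The decisive structural gain is that $\varphi_{\alpha,\beta}$ is entire of growth order $1/2$, so the Green energy in the numerator of \cref{here it is} grows only as $\asymp\sqrt{R}$, whereas the conformal gain $\log|\varphi_u'(0)|$ in the denominator is $\asymp\log R$. Since $H$ has holonomic rank $2$, the system $\{H^j\}_{j=0}^{k}$ is holonomic and, for $k$ below the dihedral degree, $K(\eta,x)$-linearly independent; the Chudnovsky--Osgood bad-approximability input required by \cref{theotrue} is standard for this algebraic system. Feeding these data into \cref{here it is} with $\mu=k$, $m=k+1$, and Apéry limit $\eta_u=\sqrt[r]{a}$ yields, schematically,
\[
k \;\ll\; \frac{c\sqrt{R}}{\log R \,-\, O(\log(r\log r)) \,-\, \bigl(1-\tfrac{1}{k+1}\bigr)\dfrac{2\log R}{\mueffKu(\sqrt[r]{a})}},
\]
and a parametric optimization in $(k,R)$ patterned on the $\sqrt[r]{2}$ calculation in the excerpt yields a sub-Liouville effective bound. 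The number-field exponents $d/(d+1)$ and $(2d+1)/(d+1)$ in \cref{full bound Archimedean} emerge naturally after rebalancing this optimization against the global denominator cost $\sum_{v\in M_K}\log|\varphi_v'(0)|_v$, which scales with $d=[K:\Q]$.

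The factorization $r=ls$ encodes the Waldschmidt-style uniformity. Rather than the extreme specialization $x_0:=1-a$, which has poor conformal size when $|a-1|_v$ is large, I would use an $l$-twisted $x_0:=1-a^l/c^r$ for an anchor $c\in K$ obtained by a cheap Dirichlet-box argument making $|1-a^l/c^r|_v$ small, coupled with a Kummer-equivariant reading of $H$ over the Galois closure $K(\sqrt[r]{a})$. This inflates the required radius only by a factor $e^{h(a)/l}$, and the choices $R\asymp l\,e^{h(a)/l}\,s\log s$ together with $k+1\asymp s^{1/(d+1)}(\log s)^{d/(d+1)}$ then deliver \cref{full bound Archimedean}, with the linear prefactor $(1+h(a^{1/r})+h(b))$ tracking the per-root inflation $\max(|\beta|_v,1)^{\nu(i)}$ built into the evaluation heights of \cref{rmk: effC}. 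The main obstacle I foresee is the clean Kummer-equivariant setup ensuring that the $K(\eta,x)$-span of $\{H^j\}_{j=0}^{k}$ has genuine dimension $k+1$ (not accidentally lowered by $a$ being a proper power matched by the anchor $c$), and that the Chudnovsky--Osgood input survives the twist; once the dimension-and-approximability count is in hand, the rest is routine parametric optimization.
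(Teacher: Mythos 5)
Your high-level plan matches the paper's---apply~\cref{theotrue} to the power system $\{1, H, \ldots, H^k\}$ with $H$ the overconvergent dihedral combination of~\cref{some dihedral algebra}, bring in the order-$1/2$ entire map $\varphi_{\alpha,\beta}$ from \cref{multivalent maps}~(2) at the distinguished Archimedean place, and optimize $k$ and $R$---but two essential ingredients of the execution are missing or replaced by steps that would fail. Concerning the factorization $r=ls$ and the uniformity in $h(a)$: you fix $\nu = 1/r$ and anchor the specialization at $x_0 := 1 - a^l/c^r$ for a Dirichlet-box approximant $c \in K$. Note that $(1-x_0)^{1/r} = a^{l/r}/c = a^{1/s}/c$, so the Ap\'ery limit of your $H$ is not $\sqrt[r]{a}$; and, more seriously, any anchor $c$ with $|1-a^l/c^r|_v$ small costs height $h(c)\gg 1$, so $h(x_0) \gtrsim l\,h(a) + r\,h(c)$ increases with $h(a)$ and $r$ rather than decreasing, which cannot reproduce the $e^{h(a)/l}$ dependence of~\cref{full bound Archimedean}. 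The paper does something different: it sets $\nu := 1/s$ and specializes $x := 1 - a^{1/l}$ (so that $(1-x)^\nu = a^{1/r}$ and $h(x) = h(a)/l + O(1)$), then runs~\cref{theotrue} over the extension $K' := K(a^{1/l})$, of degree $\leq l$ over $K$; the reduction $h(a) \rightsquigarrow h(a)/l$ appears explicitly as the subtracted $h(a)/l$ term in the holonomy quotient~\cref{order 1/2 and holonomy quotients}, and this is where the factor $e^{h(a)/l}$ comes from. The Dirichlet-box step in this paper (Bombieri's \cref{simplest lemma}) lives only in the outer reduction of~\cref{the new proof}, not inside the root propositions; the proposition's purpose, as explained at the end of~\cref{sec:raizes}, is precisely to dispense with the Thue ghost lever by substituting the multivalence of~$\varphi_{\alpha,\beta}$.

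The second gap concerns the remaining Archimedean places. You install $\varphi_u(z) = \varphi_{\alpha,\beta}(Rz)$ only at $u \mid v$ and trivial maps ``at every finite place,'' leaving the other Archimedean places implicitly trivial. But the identity map has zero Green energy and zero conformal gain, so those places contribute nothing to either side of~\cref{here it is}. After normalizing over $[K':\Q]$, positivity of the denominator then forces $R \gtrsim (s\log s)^d$, and the resulting holonomy condition requires $k+1 \gg (s\log s)^{d/2}$, which exceeds $s$ (hence is unachievable, since $k < s$ is required) as soon as $d \geq 2$. The paper places the order-$1$ map $\psi_{\alpha,\beta}(Rz)$ from~\cref{multivalent maps}~(1) at all Archimedean $w \nmid v$ and rescales the overconvergent map to $\varphi_{\alpha,\beta}(R^2 z)$ at $w \mid v$ so that the Green energies at all Archimedean places balance to $\asymp R$; that balancing is exactly what produces the conformal term $\frac{d+1}{d}\log R$ in~\cref{order 1/2 and holonomy quotients} and hence the exponent $d/(d+1)$. (A smaller discrepancy: your heuristic quotient has $2\log R/\mu$ where the paper carries $c_4\,kl/\mueffKu$---the extra factor $l$ matters for the $l$-dependence of the final bound.)
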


\begin{proof}
We describe the basic idea, whose proper detail is deferred to a subsequent publication. 
By a standard reduction, we may replace~$a$ by $a^{\pm m}$ for some $1 \le m \le 6$
so that $|1 - a^{1/l}|_u \leq 1$. 
 Apply~\cref{some dihedral algebra} to the exponent $\nu := 1/s$ 
 and the specialization $x := 1 - a^{1/l}$ of height $h(a)/l + O(1)$. With $H := B_{1/s}(1-a^{1/l},y) - \sqrt[r]{a} \, A_{1/s}(1-a^{1/l},y) \in K[a^{1/l}]\llbracket y \rrbracket + \sqrt[r]{a} \, K[a^{1/l}]\llbracket y \rrbracket$, we apply~\cref{theotrue} to the system $\{1, H, \ldots, H^k\}$, changing the variable notation~$x$ of~\cref{sec:quantitative} to our present variable~$y$, and working over the number field~$K' := K(a^{1/l})$ to replace the field denoted~$K$ there. The extension~$K'/K$ has degree at most~$l$. At each place $w \in M_{K'}$
 extending~$v$, we select $\eta_w  := \zeta \sqrt[r]{a}$ for the $l$-th root of unity~$\zeta = \zeta_w$ for which the $w$-place of $ \zeta \sqrt[r]{a}$
 is equal to the $u$-place of $\sqrt[r]{a}$. 
  Here again, $k < s$ is a parameter to optimize. With $\alpha, \beta \in \C$ the dihedral ODE singularities, we take the map~$\varphi_w(z) := \varphi_{\alpha,\beta}(R^2z)$
  of~\cref{multivalent maps}, for~$w \mid v$, and the map $\varphi_w(z) := \psi_{\alpha,\beta}(R z)$, for the other Archimedean places~$w \nmid v$. This time the holonomy quotient~\cref{sample holonomy quotient} reads (upon a positive denominator):  
 \begin{equation}   \label{order 1/2 and holonomy quotients}
k+1 \ll \frac{R}{\frac{d+1}{d} \log{R} - \log{(s \log{s})} - c_0 -  h(a)/l- c_4 - (c_4 \, kl/\mueffKu(\sqrt[r]{a}))\log{R} }. 
\end{equation}
The similar choices of~$R$ and~$k$ as in the~$\sqrt[r]{2}$ illustrative example now lead directly to the bound~\eqref{effective root}. The uniform dependence on the height $h(a)$ 
is traced back to the dependence of the constant~$C$ in~\cref{rmk: effC}. 
\end{proof}

In the nonarchimedean case we have a closely similar bound, now based on an Archimedean multivalence in~\cref{multivalent maps} together with the special circumstance of Teichm\"uller representatives: $\lim_{n \to \infty} a^{p^n} \in \mu_{p^{\infty}}(K_v)$, if
$p$ is the residual characteristic of~$K_v$.  

\begin{proposition}[Nonarchimedean case]  \label{effective p-adic}
Let $p = p_v$ be the residual characteristic of~$v$ and $h \in [1,r/e]$ a parameter. Consider $r = ls$ with $l \equiv 1 \mod{p}$,  where $s \asymp \sqrt{r/h}$ and $l \asymp \sqrt{rh}$. Then, for all~$a \in K$ 
with $h(a) \leq h$, and at every place $u \mid v$ of~$K(\sqrt[r]{a})$, 
\begin{equation} \label{effective root too}
\mueffKu(\sqrt[r]{a}) \ll_{v,d}   h  \left( r/h \right)^{\frac{2d-1}{2d}} \log^2(r/h). 
\end{equation}
More precisely, for all~$b \in K$, the condition~$r \geq c_5h$ implies 
\begin{equation}   \label{full bound nonarchimedean}
\log{|\sqrt[r]{a} - b|_u} \gg_{v,d}  -\left(1 +  h(\sqrt[r]{a}) + h(b)\right)  \, h  \left( r/h \right)^{\frac{2d-1}{2d}} \log^2(r/h). 
\end{equation}
\end{proposition}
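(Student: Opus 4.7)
\emph{Proof proposal.} The argument mirrors that of \cref{effective Thue-Siegel-Mahler}, with the nonarchimedean place $v$ taking the role of the distinguished Archimedean place, and a Teichm\"uller-type mechanism playing the part of the growth-order-$1/2$ map $\varphi_{\alpha,\beta}$ there. Using $l\equiv 1\pmod p$ together with the Teichm\"uller decomposition $a=\omega(\bar a)\cdot u$ at $v$ and the exponentially fast $v$-adic descent $u^{p^n}\to 1$ on one-units, we pass to a bounded extension $K'=K(a^{1/l},\omega(\bar a)^{1/r})$ and select the $l$-th root so that $x:=1-a^{1/l}$ satisfies $|x|_v\le|p|_v^{c_0 l}$ for an absolute $c_0>0$. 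In turn, the nearby dihedral singularity from \cref{some dihedral algebra} at $y=((1-\sqrt{1-x})/x)^2$ is pushed to $|y|_v\lesssim|p|_v^{2c_0 l}$ while the far one at $y=((1+\sqrt{1-x})/x)^2$ retreats to $|y|_v\gtrsim|p|_v^{-2c_0 l}$, opening an exponentially wide $v$-adic region of overconvergence for the combination $H(y):=B_{1/s}(x,y)-\sqrt[r]{a}\,A_{1/s}(x,y)$.

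\emph{Applying \cref{theotrue}.} With $\nu:=1/s$, apply \cref{theotrue} over $K'$ to the power system $\{1,H,H^2,\ldots,H^k\}$, with $k<s$ a free parameter. At the distinguished place $w\mid v$, select $\varphi_w(z):=R_v z$ with $|R_v|_v\asymp|p|_v^{-2c_0 l}$ so that the closed unit $z$-disc covers the full $v$-adic overconvergent region of $H^k$, while the matching convergence radius of the individual monomials $A_{1/s}^{\,i}B_{1/s}^{\,k-i}$ becomes $\rho_w\asymp|p|_v^{4c_0 l}$; at each Archimedean place $u$ take $\varphi_u(z):=\varphi_{\alpha,\beta}(Rz)$ from the second part of \cref{multivalent maps} with a common positive parameter $R$; and at every remaining finite place, $\varphi=\mathrm{id}$. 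Reading off the Archimedean contribution $\asymp d\sqrt R$ from the order-$1/2$ growth, and balancing the $v$-adic conformal size $\log|R_v|_v$ against $\log\rho_w^{-1}$ through the accompanying $\kappa_v$-term in \cref{here it is}, the holonomy quotient reduces to a schematic form
\[
k+1\;\ll\;\frac{d\sqrt R}{\tfrac{d+1}{d}\log R+c_0\,l\log p-\log(s\log s)-h(a)/l-c_1-(c_1\,kl/\mueffKu(\sqrt[r]{a}))\log R}.
\]

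\emph{Optimization and main obstacle.} The parameter split $s\asymp\sqrt{r/h}$ and $l\asymp\sqrt{rh}$ is forced by $ls=r$ together with matching the exponential $v$-adic overconvergence (of order $l\log p$) against the hypergeometric denominator exponent~$s$. A choice of $R$ polynomial in $r/h$ with exponent $1/(2d-1)$, together with $k$ slightly below $s$, clears the denominator and rearranges to the desired bound $\mueffKu(\sqrt[r]{a})\ll_{v,d}h\,(r/h)^{(2d-1)/(2d)}\log^2(r/h)$; the linear dependence on $1+h(\sqrt[r]{a})+h(b)$ in~\cref{full bound nonarchimedean} tracks through the explicit constant of \cref{rmk: effC}. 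The substantive technical hurdle is the Teichm\"uller step: establishing \emph{linear-in-$l$} valuation $|x|_v\le|p|_v^{c_0 l}$ uniformly in $a$ of height $\le h$ requires a careful Frobenius descent on the one-units of $K_v$ along with a bound on the height of the coset representative of $a^{1/l}$ in $K'$, making sure the constant $c_0$ is absolute and not diluted by the ramification index $e_v$ or residual degree $f_v$ at $v$.
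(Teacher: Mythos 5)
There is a genuine gap in the mechanism you use to create $v$-adic overconvergence, and a related misidentification of which multivalent map can be used at the Archimedean places. You specialize at $x := 1 - a^{1/l}$ and claim $|x|_v \le |p|_v^{c_0 l}$ via a ``Teichm\"uller step''; but taking an $l$-th root with $l \equiv 1 \pmod p$ does \emph{not} shrink the distance to $1$ $v$-adically. For a $1$-unit $u$, one has $|1 - u^{1/l}|_v \asymp |1 - u|_v$ since $l$ is invertible mod $p$, and the Teichm\"uller factor $\omega(\bar a)^{1/l}$ need not be close to $1$ at all. What actually produces the exponential $p$-adic smallness is a \emph{separate} $p$-power parameter $q := p^n$: the paper specializes at $x := 1 - a^{q/l}$ and obtains $|x|_w \le |q|_w$ for all $w \mid p$. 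The exponent $l$ is not a $p$-power (it is $\equiv 1 \bmod p$), and its role is the orthogonal one of controlling the height $h(x) = q\, h(a)/l + O(1)$. Concretely, the constraint $q h(a)/l \ll 1$ with $l \asymp \sqrt{rh}$ forces $q \asymp \sqrt{r/h}$, which is then matched with $s$; the $v$-adic conformal gain is $\asymp d^{-1}\log q$, not $\asymp l\log p$, and it is balanced against the dihedral denominator cost $\log(s\log s)$, not against $s$.

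Secondly, you take $\varphi_u(z) = \varphi_{\alpha,\beta}(Rz)$ (the growth-order-$1/2$ map from \cref{multivalent maps}(2)) at the Archimedean places and read off a numerator $\asymp d\sqrt R$. But that map is only available at an embedding where the function ``overconverges'' at the singularity $\alpha$; here the overconvergence is realized $p$-adically (at $w \mid v$), so at \emph{every} Archimedean place one may only use $\psi_{\alpha,\beta}$ from \cref{multivalent maps}(1), which has growth order $1$. The numerator of the holonomy quotient is therefore $\asymp R + d^{-1}\log q$, not $\asymp d\sqrt R$. With these two corrections in place—namely, introducing the auxiliary $p$-power $q$, specializing $x := 1-a^{q/l}$, using $\psi_{\alpha,\beta}(Rz)$ at all $w \mid \infty$, and then choosing $q \asymp s \asymp \sqrt{r/h}$, $R \asymp s^{1-1/d}\log s$, and $k \asymp s^{1-1/d}\log s$—one does recover the stated exponent $(2d-1)/(2d)$. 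Your computed split $s \asymp \sqrt{r/h}$, $l \asymp \sqrt{rh}$ is correct, but the reasoning that forced it (``exponential overconvergence of order $l\log p$ against $s$'') is not the operative balance; the true constraints are $qh(a)/l \ll 1$ and $q \asymp s$.
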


\begin{proof}
We indicate only a brief guideline. A standard reduction allows us to assume that $(1-a)/p$ is an algebraic $p$-integer. 
Write $q := p^n$ with $n$ a parameter, and apply~\cref{some dihedral algebra} to the exponent $\nu := 1/(qs)$ 
 and the specialization $x := 1 - a^{q/l}$, with $|x|_w \leq |q|_w$ for all~$w \mid p$, and of height $qh(a)/l + O(1)$. As 
 in~\cref{sec:zeta2(5)} below, the multivalence input will be Archimedean with a choice of radius~$R$, this time applied to  a
 map from~\cref{multivalent maps}~(1) at all Archimedean places. Now the holonomy quotient reads (upon a positive denominator):
  \begin{equation}  \label{hol too}
k+1 \ll_{v,d} \frac{R + d^{-1} \log{q}}{ \log{R} +  d^{-1} \log{q} - \log{(s\log{s})} - c_6  - qh(a)/l - (c_6 \, kl/\mueffKu(\sqrt[r]{a}))\log{q} }. 
\end{equation}
Select $q \asymp_{p} s \asymp \sqrt{{r/h}}$, so that~$qh(a)/l \ll 1$, and $R \asymp_{c_6} (s\log{s})/q^{1/d} \asymp s^{1-1/d} \log{s}$. With $k := \lfloor c_7 \, s^{1-1/d} \log{s} \rfloor$ the holonomy bound \cref{hol too} amounts to~\cref{effective root too}. The refinement to~\eqref{full bound nonarchimedean} is once again
routine from tracking the constant~$C$ in~\cref{rmk: effC}, as we will do in proper detail in a subsequent paper.  
 \end{proof}

\subsection{A new proof of the transcendence of~$\pi$ and other logarithms.}  \label{new proof of transcendency of Pi}
If we take the $\nu \to 0$ limit of~\cref{dihedral equation}, we obtain an ODE
whose solutions are given by $A(x,y) :=  \left( 1 - 4y + 2xy + x^2y^2 \right)^{-1/2}$ 
and
$B(x,y) := 
 2 A(x,y) \log{ \left(  1 - x \left(  1 + xy - \sqrt{ 1 - 4y + 2xy + x^2y^2} \right) /2 \right) }$, and
such that $H := B(x,y) - \log(1-x) A(x,y)$ is the $y$-generating series of the diagonal Hermite--Pad\'e approximants of the logarithm function~\cite[\S~3.3.7]{L2chi}. 
The domain $y \in \P^1 \setminus \{((1\pm \sqrt{1-x})/x),\infty\}$ and overconvergence property of~$H$ are exactly the same as in the discussion with the algebraic dihedral functions in~\cref{sec:multivalent Thue}, but now $H$ is a transcendental function with an infinite dihedral monodromy $D_{\infty} \cong (\Z/2\Z) \ast (\Z/2\Z)$,
and correspondingly~$B \in \Q[x] \llbracket y \rrbracket$ has~$[1,\ldots,n]$ type denominators. 
Examining the analog of the holonomy quotients~\cref{order 1/2 and holonomy quotients} and~\cref{hol too} 
leads in this context to a new proof of the transcendence of the complex and $p$-adic logarithms~$\log{a}$ at all algebraic arguments $a = 1-x \in \Qb^{\times} \setminus \{1\} 
\hookrightarrow \C$
or  $a \in \Qb^{\times} \setminus \mu_{p^{\infty}} \hookrightarrow \C_p$ with $|1-a|_p < 1$. We illustrate this on the case of~$\pi$ (with $x=2$). Replacing~$y$ by~$i z$, we apply the dihedral  method
to $H(z) := B(z) - \pi A(z)$  on $z \in \P^1 \setminus \{-1/2,1/2,\infty\}$, where:
\begin{equation} \label{piformula}
A(z) := (1-4z^2)^{-1/2} \in \Z\llbracket z \rrbracket, \quad B(z) := -2A(z) \arcsin(2z) 
\in \sum_{n \ge 0} \frac{z^n \, \Z}{[1,\ldots,n]}.
\end{equation}
 If~$\pi$ were contained by some 
number field~$K$ of some degree~$d$, the system $\{1, H, \ldots, H^k\}$ would fit~\cref{theotrue} with type $[1,\ldots,n][1,\ldots,n/2] \cdots [1,\ldots,n/k]$, which has
$\tau(\bb) = \sum_{j=1}^{k} 1/j = \log{k} + O(1)$. At this point, taking again the maps $\varphi_{-1/2,1/2}(R^2z)$ at one (the ``overconvergent'') embedding $K \hookrightarrow \C$
and $\psi_{-1/2,1/2}(Rz)$ at the~$d-1$ other complex embeddings, the---qualitative, for simplicity---holonomy quotient bound similar to~\cref{order 1/2 and holonomy quotients} becomes
$
k+1 \ll R \big/ \left( \frac{d+1}{d} \log{R} - \sum_{j=1}^{k} 1/j \right),
$
provided a positive denominator. With the choice of radius $R := (4k)^{\frac{d}{d+1}}$,
this yields a contradiction for $k \gg_d 1$.

The alternate specialization~$x=-1$ recovers exactly the approximations used
by~\cite{AlladiRobinson,Apery} to produce
an explicit irrationality measure for~$\log 2$. In contrast, the~$x=2$  specialization considered
above does
not even directly give a sequence of rational numbers converging to~$\pi$, but yet  
still leads to
the transcendence of~$\pi$, and even to explicit (if not impressive) irrationality measures such as $\mu(\pi) \leq 15.086$.

\section{Conclusion of the new proof of~\cref{main Diophantine theorem}.}   \label{the new proof}

In~\cref{effective Thue-Siegel-Mahler,effective p-adic} we constructed a  monotonic function $g = g_{K,v} : \R_{\geq 0} \to \R_{> 1}$ which can be made fully explicit,
depending only on~$K$ and~$v$ and having $\lim_{x \to \infty} g(x) = \infty$, such that, for all $r \in \N$ possessing a factor~$l \asymp 1+h(a)$, resp. a factor~$l \asymp \sqrt{r(1+h(a))}$ congruent to~$1\mod{p}$, the following lower bound in two logarithms holds uniformly for all places $u \mid v$ of~$K(\sqrt[r]{a})$ and all elements $a,\eta \in K$: 
\begin{equation}  \label{Waldschmidt uniformity abstraction}
\log{|1-a^{1/r}\eta|_u} \geq - \left(\log{2}+h(a^{1/r}) + h(\eta) \right) r \big/ g(r/h(a)). 
\end{equation}
Construct $c_8(K,v,\varepsilon) \in \R_{\geq 1}$
so that $g_{K,v}(x) \geq 2/\varepsilon$ for all $x \geq c_8(K,v,\varepsilon)$. Then, for all exponents~$r$ that meet our \emph{ad hoc} factorizability condition for~\eqref{Waldschmidt uniformity abstraction}, we derive: 
\begin{equation}   \label{binomial Diophantine}
a, \eta \in K \textrm{ with }  r \geq c_8(K,v,\varepsilon) h(a)  \quad \Longrightarrow \quad |1-a^{1/r} \eta|_u \geq
H(a)^{-\varepsilon/2} (2H(\eta))^{-\varepsilon r/2}. 
\end{equation}
The next type of argument to conclude~\cref{main Diophantine theorem}  is due to Bombieri~\cite[\S~8]{BombieriGm} and is now standard. The simplest form of~\cite[Lemma~4]{BombieriGm} is: 
\begin{lemma} \label{simplest lemma}
Consider $n_1, \ldots, n_t \in \Z$ and two 
integer parameters $Q, N \in \N$. There exists a positive integer~$r$ of the form
$r = Q!N/q$ for some~$q \in \{1, \ldots, Q\}$
and rational integers $p_1, \ldots, p_t \in \Z$ satisfying 
\begin{equation}  \label{Dirichlet approx}
|n_i - rp_i|  \leq rQ^{-1/t}, \quad i = 1, \ldots, t. 
\end{equation} 
\end{lemma}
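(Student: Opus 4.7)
The plan is to reduce the claim to the classical Dirichlet simultaneous-approximation principle, applied to the rescaled quantities
\[
\alpha_i := \frac{n_i}{Q!\,N} \in \Q, \qquad i = 1, \ldots, t.
\]
After writing $r = Q!N/q$ and dividing the target inequality $|n_i - r p_i| \leq r Q^{-1/t}$ through by~$r$, it becomes $|q\alpha_i - p_i| \leq Q^{-1/t}$ for every~$i$. So the whole statement amounts to finding $q \in \{1, \ldots, Q\}$ together with integers $p_1, \ldots, p_t$ simultaneously realising this Diophantine bound on the common-denominator approximations of the~$\alpha_i$.

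For this I would run the standard Dirichlet pigeonhole. Form the $Q+1$ vectors $v_q := \big( \{q\alpha_1\}, \ldots, \{q\alpha_t\} \big) \in [0,1)^t$ for $q = 0, 1, \ldots, Q$, where $\{\cdot\}$ denotes fractional part. Partitioning $[0,1)^t$ into at most~$Q$ axis-aligned half-open sub-boxes of side $\leq Q^{-1/t}$ in each coordinate, two of these vectors --- say $v_{q_1}$ and $v_{q_2}$ with $q_1 < q_2$ --- must land in the same box. Taking the difference of the corresponding indices yields $q := q_2 - q_1 \in \{1, \ldots, Q\}$ together with integers $p_i := \lfloor q_2 \alpha_i \rfloor - \lfloor q_1 \alpha_i \rfloor$ satisfying $|q\alpha_i - p_i| \leq Q^{-1/t}$ componentwise, as required.

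It remains to check that the resulting $r := Q!N/q$ is a positive integer, which is automatic: since $q \in \{1, \ldots, Q\}$ we have $q \mid Q!$, so $Q!/q \in \N$ and hence $r = (Q!/q)\cdot N \in \N$. There is no essential obstacle in the argument; the only mild technical nuisance is to arrange the pigeonhole so that the output bound comes out as $\leq Q^{-1/t}$ (rather than the somewhat weaker $\leq 1/\lfloor Q^{1/t} \rfloor$ one gets from an uncritical grid decomposition), and this is routinely handled either by a Minkowski convex-body argument in place of raw pigeonhole, or by careful half-open/closed edge bookkeeping on the box partition. The particular shape $r = Q!N/q$, rather than the more naive $r = N/q$, is engineered precisely so that $r$ stays integral across \emph{all} admissible denominators $q \leq Q$; this is the feature that will make~\cref{simplest lemma} dovetail with the binomial Diophantine bound~\cref{binomial Diophantine} in the subsequent geometry-of-numbers deduction of~\cref{main Diophantine theorem}.
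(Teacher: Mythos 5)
Your proposal is correct and takes essentially the same route as the paper: rescale by $Q!N$, apply Dirichlet simultaneous approximation with parameter~$Q$ to $(n_1/Q!N,\ldots,n_t/Q!N)$ to get $q\in\{1,\ldots,Q\}$ and $p_1,\ldots,p_t$, then set $r:=Q!N/q$ and observe $q\mid Q!$ ensures $r\in\N$. The paper simply cites Dirichlet's theorem in one line, whereas you unwind the pigeonhole; you correctly flag that a literal grid partition of $[0,1)^t$ into ``at most~$Q$'' boxes of side $\leq Q^{-1/t}$ does not actually exist unless $Q^{1/t}\in\N$, and that the exponent-exact bound $Q^{-1/t}$ (as opposed to $1/\lfloor Q^{1/t}\rfloor$) is cleanest via Minkowski's convex-body theorem applied to the box $\{|q|\leq Q,\ |q\alpha_i - p_i|\leq Q^{-1/t}\}$, whose volume is exactly~$2^{t+1}$.
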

\begin{proof}
Dirichlet's approximation theorem applied to the vector $(n_1/Q!N, \ldots, n_t/Q!N)$ and with~$Q$ as the Dirichlet parameter
gives a simultaneous rational approximation $(p_1/q, \ldots, p_t/q)$ with some common denominator $q \in \{1, \ldots, Q\}$ for which~\cref{Dirichlet approx} holds with~$r := Q!N/q \in \N$.
\end{proof}
\begin{remark} 
Of course, one can replace $Q!$ in this argument by the lowest common multiple $[1,\ldots,Q]$. Bombieri and Cohen used a slightly more involved version~\cite[Lemma~4]{BombieriGm}, \cite[\S~6]{BombieriCohenII} with weights, 
which for large $h(A)$ leads to the bounds with~\cref{depend C}. We suppress this feature for our introductory purposes in this paper. 
\end{remark}

Consider an arbitrary $\gamma = \zeta \xi_1^{n_1} \cdots \xi_t^{n_t}$ as in \cref{main Diophantine theorem}, where $\zeta \in \Gamma_{\mathrm{tors}}$
is a root of unity and $\xi_1, \ldots, \xi_t$ are generators for the free part $\Gamma / \Gamma_{\mathrm{tors}}$. To bound the height of~$\gamma$ means 
to bound the largest absolute value of an exponent~$n_i$. With the~$r$ and the~$p_1, \ldots, p_t$ coming from \cref{simplest lemma},
there exists a decomposition 
$$
\gamma = a_0 \eta^r, \quad \eta := \xi_1^{p_1} \cdots \xi_t^{p_t}, \quad a_0 := \zeta \xi_1^{n_1 - rp_1} \cdots \xi_t^{n_t - rp_t}, \quad \text{with} \quad  h(a_0) \leq c_9(\Gamma)  \,  rQ^{-1/t}. 
$$ 
We arrive at a coset representative $a := Aa_0 \in A\gamma \Gamma^r < A\Gamma$ with height satisfying
\begin{equation} \label{element construction}
|h(a) - h(A)| \leq h(a_0)  \leq c_9(\Gamma)  \,  rQ^{-1/t}.
\end{equation}
In~\cref{simplest lemma}, we select 
\begin{equation}  \label{choice of Q}
Q = Q(K,v,\Gamma,\varepsilon) :=  \lceil (2c_8(K,v,\varepsilon))^t c_9(\Gamma)^t \rceil,
\end{equation}
getting by~\eqref{element construction} the bound
\(
h(a) \leq h(A) + r / \left( 2 c_8(K,v,\varepsilon) \right).
\)
Upon changing the coset representative~$A$ of~$A\Gamma$, for the purpose of proving~\cref{main Diophantine theorem} we may and do assume that~$h(A) \geq L$, with another and arbitrary constructive function  $L : = L(K,v,\Gamma,\varepsilon)$
of the data which we are free to select later.
Choose $\alpha := \sqrt[r]{a}$ in a place $u \mid v$ of $L := K(\sqrt[r]{a})$ to be the branch closest to the $K$-rational element~$\eta^{-1} \in \Gamma \subset K$. 
Then $(\alpha \eta)^r = A\gamma$.
To supply the requisite exponent factorization $r = ls$, we will take the parameter~$N$ in~\cref{simplest lemma} to be a prime power $N = g^m$, where $g \in \{2,3\}$ is~$2$ unless~$v$ is nonarchimedean of residual characteristic~$2$, and~$g=3$ in the latter case. {\it Define~$N$ to be the smallest power of~$g$ that exceeds $2c_8(K,v,\varepsilon) h(A)/(Q-1)!$. }
Our remark regarding $h(A) > L$ then allows us to assume $N > Q!^2$. 
This in turn ensures that $r = NQ!/q$ does indeed possess both requisite 
factorizations.
The bound~\cref{Waldschmidt uniformity abstraction} is therefore in place, and~\cref{binomial Diophantine}
follows. 

Our (or rather Bombieri's) choices of~$Q$ and~$N$ were selected precisely to supply the condition $r \geq c_8(K,v,\varepsilon) h(a)$ for~\cref{binomial Diophantine}. First, 
\cref{element construction} with the choice~\cref{choice of Q} give $h(a) \leq h(A) + r / \left( 2 c_8(K,v,\varepsilon) \right)$. Then the requisite bound $r \geq c_8(K,v,\varepsilon) h(a)$
follows from the implication
\begin{equation}
r \geq (Q-1)!N \geq 2c_8(K,v,\varepsilon) h(A)  \quad \Longrightarrow \quad r \geq
c_8(K,v,\varepsilon) \left(  h(A) + r / \left( 2 c_8(K,v,\varepsilon) \right) \right) 
\geq c_8(K,v,\varepsilon) h(a),
\end{equation}

In the direction opposite to~\eqref{binomial Diophantine}, as in~\cite[Equation (8.4)]{BombieriGm}, we have
\begin{equation}
|1-\alpha \eta|_u  < 2^r |1 - A\gamma|_v \leq 2^r H(\gamma)^{-\varepsilon} \leq 2^{Q!N}H(A)^{\varepsilon} H(A\gamma)^{-\varepsilon} = 2^{Q!N} H(A)^{\varepsilon} H(\alpha \eta)^{-\varepsilon r}, 
\end{equation}
under our assumption~\cref{main Diophantine inequality} in the theorem-under-proof. Together these conditions bound $h(\alpha\eta) \leq c_{10}(K,v,\Gamma,\varepsilon)$, 
therefore $h(A\gamma) \leq c_{10}(K,v,\Gamma,\varepsilon) r \leq c_{10}(K,v,\Gamma,\varepsilon) Q!N$. Hence ultimately
$h(\gamma) \leq h(A) + h(A\gamma) \leq h(A) +  c_{10}(K,v,\Gamma,\varepsilon)  Q!N
 \leq c_{11}(K,v,\Gamma,\varepsilon)( 1 + h(A))$ due to the choice of~$N$. We arrive at $ C(K,v,\Gamma,\varepsilon) := c_{11}(K,v,\Gamma,\varepsilon)$.

\section{Irrationality measures: Examples.} We finish with two illustrative examples of~\cref{theotrue} and its variants. The first is an irrationality
measure for~$L(2,\chi_{-3})$, making one of the main irrationality results of~\cite{L2chi}
effective (see~\cref{rmk: effC}). The second is the irrationality of~$\zeta_2(5) \in \Q_2$, which in early~2020 had been our first joint result in this collaboration. 

\subsection{An irrationality measure for~$L(2,\chi_{-3})$.}
We can apply~\cref{theomainreprise} (in the form~\cref{withintegrationsremark})
to give an explicit irrationality measure for~$L(2,\chi_{-3})$ using
the explicit template~$\varphi(z) = h(\psi(z))$ of~\cite[Def~A.4.2]{L2chi}.
The overconvergent singularity in this setting corresponds to~$-1/72$.
The unique preimage~$x$ of~$-1/72$ of~$h$ in the image of~$\psi(z)$ is computed in
 (\cite[Lemma~A.4.4]{L2chi}); the~$\rho$ with~$\psi(\rho)=x$
 and thus~$\varphi(\rho) = -1/72$
is
$$\rho \sim   0.000049668  - 0.000070341 i, \quad |\rho^{-1}| < 11614.$$
Precisely~$7$ of our~$m=14$ functions are genuine (Lemma~\cite[12.1.1]{L2chi}), 
and so~$\gamma = 1/2$.
Thus~\cref{withintegrations} becomes
\begin{equation}
\label{lessthan14}
14 < 
\frac{
11.845
}
{
\displaystyle{\log \left( 256  \cdot
\frac{5448339453535586608000000000}{8658833407565631122430056127}
\right)
- \left(\frac{27}{80}  + \frac{191}{49}\right)
-  \log(11614)  \left(\frac{\kappa - (1/4)}{\kappa^2}\right)
}
}
\end{equation}
As~$\kappa \rightarrow \infty$, the RHS converges to $13.9938\ldots < 14$
(cf.~\cite[A.5.1]{L2chi}), which
shows that~$L(2,\chi_{-3}) \notin \Q$. From~\cref{lessthan14},
 we deduce a contradiction
as soon as~$\kappa$ is sufficiently large. Explicitly, we deduce:

\begin{theorem} For all $p/q \in \Q$ except possibly a finite and computable list, we have
$\displaystyle{\left| L(2,\chi_{-3}) - \frac{p}{q} \right| > \frac{1}{q^{24781}}}$.
\end{theorem}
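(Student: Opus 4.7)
The plan is to deduce the explicit bound $\mueff(L(2,\chi_{-3})) \leq 24781$ directly from inequality \cref{lessthan14}, which is already derived via \cref{theomainreprise} in the refined form of \cref{withintegrationsremark} applied to the explicit $L(2,\chi_{-3})$ Ap\'ery-limit construction of \cite{L2chi}. All numerical ingredients are inherited from that construction: the conformal map $\varphi(z) = h(\psi(z))$ of \cite[Def.~A.4.2]{L2chi}; the overconvergent singularity at $-1/72$ with preimage $\rho$ satisfying $|\rho^{-1}| < 11614$ from \cite[Lemma~A.4.4]{L2chi}; the counts $m=14$ and $\gamma = 1/2$ from \cite[Lemma~12.1.1]{L2chi}; the double-integral value $11.845$ for $\iint_{\T^2}\log|\varphi(z)-\varphi(w)|\,\mv$; the Bost--Charles conformal size $\log|\varphi'(0)| = \log\bigl(256 \cdot \tfrac{5448339453535586608000000000}{8658833407565631122430056127}\bigr)$; and the combined denominator-correction $\tau(\bb) + \tau^\sharp(\be) = \tfrac{27}{80} + \tfrac{191}{49}$. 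My task is to solve \cref{lessthan14} for the largest admissible $\kappa$.

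I abbreviate $A := \log\!\bigl(256 \cdot \tfrac{5448339453535586608000000000}{8658833407565631122430056127}\bigr) - \tfrac{27}{80} - \tfrac{191}{49}$ and $B := \log(11614)$, so that \cref{lessthan14} reads
$$14 < \frac{11.845}{A - B(\kappa-1/4)/\kappa^2}.$$
Hence $\kappa := \mueff(L(2,\chi_{-3}))$ must satisfy $(\kappa-1/4)/\kappa^2 > r$, where $r := (A - 11.845/14)/B > 0$, the positivity being equivalent to the already noted qualitative bound $11.845/A = 13.9938\ldots < 14$. Since $(\kappa-1/4)/\kappa^2$ is strictly decreasing in $\kappa$ for $\kappa > 1/2$, this forces $\kappa < \kappa_0 := (1 + \sqrt{1-r})/(2r)$, the larger root of the quadratic $r\kappa^2 - \kappa + 1/4 = 0$. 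A high-precision evaluation of $A$ from its closed form gives $r \approx 4\cdot 10^{-5}$ and $\kappa_0 < 24781$, yielding the bound. The finite exceptional list of $p/q$ in the theorem corresponds to the Northcott-type cutoff $h(\beta) \leq C$ in \cref{star}, with $C$ computable via the explicit recipe of \cref{rmk: effC} in terms of the meromorphic presentations of the Ap\'ery-type approximants and their Chudnovsky--Osgood functional bad-approximability constants.

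The only step requiring genuine care is the high-precision numerical evaluation of $A$: the difference $A - 11.845/14$ is a small quantity obtained as a difference of two numbers each close to $0.846$, and the threshold $\kappa_0$ is correspondingly sensitive to cancellation. This is a routine interval-arithmetic computation (one would want $r$ certified to several more digits than strictly needed for the $24781$ claim) rather than a conceptual obstacle; no new idea is introduced beyond the quantitative holonomy-bound machinery already developed in the paper.
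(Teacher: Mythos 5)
Your proposal is correct and follows the same route as the paper: both start from the already-derived inequality \cref{lessthan14} (itself an instance of \cref{withintegrationsremark} applied to the $L(2,\chi_{-3})$ construction of~\cite{L2chi}) and solve it as a constraint on~$\kappa$ via the quadratic $r\kappa^2-\kappa+1/4<0$, with the effectivity of the exceptional finite list tracked through \cref{rmk: effC}. You supply the elementary algebra that the paper elides when it writes ``Explicitly, we deduce,'' and you correctly flag that the sensitivity of $r=(A-11.845/14)/\log(11614)$ to cancellation is the only computational point that needs certified (interval-arithmetic) care, so no gap remains.
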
 

The same theorem (and proof) holds with $L(2,\chi_{-3})$ replaced by any~$\Q$-linear
combination of~$L(2,\chi_{-3})$ and~$\pi^2$.

\subsection{The irrationality of $\zeta_2(5)\in \Q_2$.} \label{sec:zeta2(5)}

Let~$\WP$ be the space of continuous homomorphisms
$\kappa: \Z^{\times}_p \rightarrow \C^{\times}_p$ with~$\kappa(-1) = 1$.
The $p$-adic zeta function $\zeta_p(\kappa)$ is a rigid analytic function on~$\WP$
away from the point~$\kappa = 1$, at which it has a simple pole.
If~$\kappa$ has the form~$x \mapsto \chi(x) x^{k}$ for an integer~$k$, then
$\zeta_p(\kappa) = L_p(\chi,1-k)$,
where~$L_p(\chi,k)$ is the Kubota--Leopoldt $p$-adic~$L$-function~\cite{Iwasawa}.
If~$\kappa$ has the form~$x \mapsto x^{2k}$ for an 
 integer~$k$, then we write~$\zeta_p(1+2k):=\zeta_p(\kappa)$.  For  \emph{complex}~$s$, write $\zeta^*(s) = (1 - p^{-s}) \zeta(s)$ for the $p$-deprived zeta function,
 that is, the usual Riemann zeta function with the local factor at~$p$ removed.
For a negative integer~$k$, there is an equality 
\begin{equation} \label{classical}
\zeta_p(1+2k) = \zeta^*(1+2k)
\end{equation}
which should be interpreted as meaning that both values (\emph{a priori} elements of~$\C_p$
and~$\C$ respectively) are both inside~$\Q$ and are equal.
For a \emph{positive} integer~$k$, the continuity of~$\zeta_p(\kappa)$
together with~\cref{classical} implies the equality
\begin{equation} \label{limitdef}
\zeta_p(1+2k)  = \lim_{m \rightarrow k} \zeta^*(1+2m),
\end{equation}
where the limit is is over $m \in \Z$ subject to the condition that $2m \equiv 2k \bmod p-1$
and the topology on~$\Z$ is induced from the inclusion~$\Z \subset \Z_p$.
One can view~\cref{limitdef} as interpolating the Kummer congruences on the Bernoulli
numbers.
The characterization of~$\zeta_p(1+2k)$ relevant for us is that, for any $2k \in \Z \setminus \{0\}$,
it is
 the unique $p$-adic number for which the formal power series
\begin{equation} \label{dontrepeat}
E^*_{-2k} := \frac{\zeta_p(1+2k)}{2} +  \sum_{n=1}^{\infty} \left( \sum_{ \substack{ d \mid n, \, p \nmid d } } d^{-2k-1} \right) q^n \in \Q_p  + q \, \Q \llbracket q \rrbracket
\end{equation}
is the $q$-expansion of an overconvergent
 $p$-adic modular form of level $\Gamma_0(p)$ and weight~$-2k$
 (\cite[\S2.2]{MR1696469}).  The specializations $E^*_{2k}$ when $k$ is a \emph{positive}
 integer are just the classical $p$-stabilized Eisenstein series of level~$\Gamma_0(p)$; they
 have coefficients in~$\Q$ by~\cref{classical}.
 We now prove that the~$2$-adic zeta value~$\zeta_2(5) \in \Q_2$ is irrational.

\begin{theorem}  \label{old2adic}
For sufficiently large $p$ or~$q$,  we have
$\displaystyle{\left| \zeta_2(5)- \frac{p}{q} \right|_2 > \frac{1}{\max(|p|,|q|)^{20}}}$.
\end{theorem}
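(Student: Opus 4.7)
The plan is to apply~\cref{theotrue} with $K = \Q$, $S = \{v_2\}$ the $2$-adic place, $\mu = 1$, and $\eta_{v_2} = \zeta_2(5)/2 \in \Q_2$; an irrationality measure for $\eta_{v_2}$ transfers to the one for $\zeta_2(5)$ in the statement after a harmless doubling. The Ap\'ery-limit input will be the overconvergent $2$-adic Eisenstein series $E^*_{-4}$ of weight $-4$ on $\Gamma_0(2)$ from~\cref{dontrepeat}, which has $q$-expansion $E^*_{-4}(q) = \eta_{v_2} + B(q)$, with $B \in \Q \llbracket q \rrbracket$ of denominator type $[1,\ldots,n]^5$ (only odd divisors $d$ contribute a $d^{-5}$ term to the $n$-th coefficient, so all the coefficients are $2$-adic integers). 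After composing with a Hauptmodul $x = x(q) \in q + q^2 \Z\llbracket q \rrbracket$ of $X_0(2)$, we have $B(x) \in \Q \llbracket x \rrbracket$ of the same denominator type, and Coleman's overconvergence theorem extends $B(x)$ meromorphically to a $2$-adic closed disc $|x|_2 \leq 1/\rho_{v_2}$ for some explicit $\rho_{v_2} \in (0,1)$ read off from the supersingular locus geometry of $X_0(2)$ at the prime $2$.

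To reach a high rank $m$ in~\cref{theotrue}, I would enlarge the system by adjoining: (i) classical $\Q$-rational overconvergent weight $-4$ forms on $\Gamma_0(2)$ built from ratios of classical Eisenstein series and cusp forms, contributing $\Q(x)$-rational elements to $\cV(\{\varphi_v\}, \{\rho_u\}, \vec{b}, \eta_{v_2}, 1)$; and (ii) Hecke translates, $\theta$-operator derivatives, and Gauss--Manin transported versions of $E^*_{-4}$ to produce additional $\eta_{v_2}$-involving Ap\'ery forms $g_i = B_i(x) - \eta_{v_2} A_i(x)$ that are genuinely $K(\eta_{v_2},x)$-linearly independent. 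By Coleman's dimension theory, the space of weight $-4$ overconvergent forms at the overconvergence parameter $\rho_{v_2}$ has dimension growing without bound, so an appropriate $m$ (with a controlled fraction $\gamma \in (0,1]$ of $\Q(x)$-rational members) is available for the desired irrationality exponent.

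At $v_2$ I would take $\varphi_{v_2}(z) = z$ with radius $\rho_{v_2}$; at the Archimedean place and all other primes $p \neq 2$, the chosen forms are convergent on the closed unit disc, so $\varphi_v(z) = z$ suffices. With denominator type $\vec{b} = (5)$ contributing $\tau(\bb) = 5$, the holonomy quotient~\cref{here it is} reduces to an explicit quadratic program in the sought exponent $\kappa_{v_2}$, with inputs $\log|\varphi_{v_2}'(0)|_{v_2} = 0$, $\log(1/\rho_{v_2})$, the numerator coming from the Bost--Charles integrals (degenerate when $\varphi_v(z) = z$), and the ratio $\gamma$. Tuning the enlargement $m$ and the explicit Coleman radius $\rho_{v_2}$ along the lines of the $L(2,\chi_{-3})$ computation of the previous subsection, one then reads off that any $\kappa_{v_2} > 20$ violates the inequality, whence~\cref{star} yields the claimed bound for all but finitely many $p/q \in \Q$.

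The hard part will be the explicit construction in step (ii) of a system of $K(\eta_{v_2},x)$-linearly independent overconvergent forms whose denominators meet the required bound sharply, together with the arithmetic determination of $\rho_{v_2}$ from the $2$-adic geometry of $X_0(2)$; these Coleman-theoretic inputs govern the threshold reached by the holonomy optimization, and tuning them so that the ceiling drops down to exactly $20$ is a numerical bookkeeping task analogous to, though somewhat heavier than, the $L(2,\chi_{-3})$ case treated above.
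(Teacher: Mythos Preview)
Your proposal has a genuine gap at the Archimedean place. You set $\varphi_\infty(z)=z$ and assert that ``the chosen forms are convergent on the closed unit disc'' there; this is false. In the Hauptmodul coordinate~$x$, the Archimedean convergence radius of the relevant power series is only~$1/64$, so the trivial template does not even satisfy the hypotheses of~\cref{theotrue}. More to the point, the paper's proof hinges on a \emph{nontrivial} Archimedean template $\varphi_\infty(z)=x(\psi(z))$ built from the Hauptmodul $x(q)=q\prod(1+q^n)^{24}$ composed with a uniformizer of a disc or lune~$\Omega \subset \D$; the paper flags this as the ``new input'' and says explicitly that for~$k=2$ (i.e.\ for~$\zeta_2(5)$, as opposed to~$\zeta_2(3)$) ``it is crucial that we exploit the analytic continuation of~$H(x)$ to a region beyond this disc.'' Without an Archimedean contribution of this kind, the numerics of~\cref{here it is} simply do not close: the Ap\'ery-style bound with trivial~$\varphi_\infty$ is precisely what already fails for~$\zeta_2(5)$.

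A second issue is your choice $\varphi_{v_2}(z)=z$. The $2$-adic overconvergence (Buzzard's theorem) says the combined function extends to $|x|_2<2^{12}$, whereas the individual pieces live on $|x|_2\leq 1$. To feed this into~\cref{theotrue} one must dilate, taking $\varphi_2(z)=2^{-12}z$ so that~$\D_2$ maps onto the overconvergence region and $\rho_2=2^{-12}$ records where the pieces converge; with your $\varphi_{v_2}(z)=z$ no overconvergence is being used at all. Finally, the paper's system is far more concrete than your proposed Coleman-theoretic family: one takes the single function $H=E_4^*E_{-4}^*$ (a weight-zero overconvergent modular function), and the six functions are simply $1,H,H',H'',H''',H^{(iv)}$, with Kontsevich--Zagier supplying the exact ODE order. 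This gives $m=6$, $\gamma=1/6$, $\tau(\bb)=175/36$, and the explicit bound~$\kappa<20$ drops out of an inequality of the shape
\[
6 > \frac{\iint_{\T^2}\log|\varphi_\infty(z)-\varphi_\infty(w)| + 12\log 2}{\log|\varphi_\infty'(0)| + 12\log 2 - 175/36 - 12\log(2)\,\dfrac{2(1-\gamma)\kappa-(1-\gamma)^2}{\kappa^2}}.
\]
Your scheme of Hecke translates and $\theta$-derivatives is not needed, and your appeal to unbounded Coleman dimensions is misdirected: that dimension grows only as the overconvergence radius shrinks, which works against the holonomy bound, not for it.
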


\begin{proof}
We begin with the same setup as~\cite{Calegari}, where it is proved
that~$\zeta_2(3) \notin \Q$. 
The new input is to consider a non-trivial 
template~$\varphi(z)$ at the Archimedean place.
We have an identification $X_0(2) \cong \mathbb{P}^1_x$ with the Hauptmodul
\begin{equation} \label{haup}
x = x(q) := \frac{\Delta(2\tau)}{\Delta(\tau)} = q \prod_{n=1}^{\infty}(1+q^n)^{24},
\end{equation}
where the ordinary locus of~$X_0(2)$ is given by the two components~$|x|_2 \le 1$
and~$|x|_2 \ge 2^{12}$ and the complementary annulus is the supersingular locus.
We  may formally invert~\cref{haup} to obtain the expansion
$$
q = x - 24x^2 +852x^3 -35744x^4 + \cdots \in x + x^2\Z\llbracket x\rrbracket.
$$
We now consider (for~$p=2$) the following pair of Eisenstein series~\cref{dontrepeat}:
\begin{equation*}
E_{2k}^*  
  \in \Q + x \Z\llbracket x\rrbracket, \qquad
E^*_{-2k} \in \frac{\zeta_2(1+2k)}{2} + x\Q\llbracket x\rrbracket.
\end{equation*}
We have $\tau(E_{2k}^*(x)) = 0$ in the $x$-coordinate and
$\tau(E_{-2k}'(x)) = 2k+1$ in the $x$-coordinate, where~$E_{-2k}' \in x\Q\llbracket x\rrbracket$
is $E^*_{-2k}$ with the constant term omitted.
Suppose for contradiction that $\zeta_2(2k+1) \in \Q$,
so that $E^*_{-2k} \in \Q\llbracket x\rrbracket$.  It follows that the overconvergent modular \emph{function}  $H := E_{2k}^* E^*_{-2k}$
for $\Gamma_0(2)$
\emph{has rational coefficients} and  satisfies $\tau(H) = 2k+1$. One finds that
 radii $R_p = 1$ for $p \notin \{2,\infty\}$.
For $p = 2$, Buzzard's theorem~\cite[Thm~5.2]{buzzard} gives analyticity of
$E^*_{2k}$ and thus of~$H$ on
the union of the component of the ordinary locus containing~$\infty$ and
the entire supersingular locus. This region is precisely the disk $|x|_2 < R_2$ of radius $R_2 = 2^{12}$: hence this is (a lower bound for)  the $2$-adic convergence radius of the power series $H(x)$.
The Archimedean convergence radius of the power series $H(x)$
is $1/64$.  For~$k=1$, the usual Ap\'{e}ry argument gives that~$\zeta_2(3) \notin \Q$, and this
is essentially the content of~\cite{Calegari}.
For~$k = 2$, it is crucial that we exploit the analytic continuation of~$H(x)$
to a  region beyond this disc.
In particular, $H(x) = H(x(q))$ is analytic on the larger region $\{|q| < 1\}$, which has conformal radius~$64$
times the na\"{\i}ve disc in the coordinate~$x$.
As in~\cite[\S A]{L2chi}, the goal is to choose a simply connected region~$\Omega \subset D(0,1)$ so that~$x(q)$ restricted to~$\Omega$ is not too large and yet the conformal
radius of~$\Omega$ is not too small. 
Without trying too hard to optimize~$\Omega$, we
 initially choose~$\Omega$ 
 to be the circle:
$\displaystyle{\Omega = \Omegacirc  = \left\{z \in D(0,1),  \  \left|z + \frac{2}{5} \right| \le \frac{3}{5}  \right\}}$,
and thus (composing~$x(q)$ with the standard uniformization~$D(0,1) \rightarrow \Omegacirc$ sending~$0$
to~$0$) the choice
$$\varphi(z) =  x \left(  \frac{z}{2z+3}    \right)
= \frac{z}{2z+3} \prod_{n=1}^{\infty}\left(1+\left( \frac{z}{2z+3} \right)^n\right)^{24}.
  $$
with $|\varphi'(0)| = 1/3$.
By Kontsevich and Zagier (\cite{kontsevichzagier},
the proof of
Fact~1 in section~2.3), the power series $H(x)$ satisfies a linear inhomogeneous differential equation
over $\Q(x)$ of the exact minimal order $5 = 2k+1$, so that the functions $1, H(x), H'(x), H''(x), H'''(x)$ and
$H^{(iv)}(x)$ exhibit \emph{six} $\Q(x)$-linearly independent elements of the space  $\mathcal{V}\left(\varphi_2(z) = 2^{-12}z, \, \varphi_{\infty}(z) = \varphi(z); \vec{(5)} \right)$.
With~$R_p = 1$ for~$p \ne 2$ and~$R_{2} = 2^{12}$, and with
$r= 1$, $\bb = (0,5,5,5,5,5)$ and
$\displaystyle{\tau(\bb) = \frac{1}{36} \sum_{i=2}^{6} 5(2i-1) = \frac{35}{36} \cdot 5}$,
 our holonomy bound~\cref{hol_bound} implies that
\begin{equation} \label{fullforce}
\begin{aligned}
6 \le m  & \leq \frac{ \iint_{\T^2} \log|\varphi(z)-\varphi(w)|  + \sum_{p} \log R_p }{\log |\varphi'(0)| + \sum_{p} \log R_p  - \tau(\bb)}
= 
 \frac{2.13322 \ldots + 12 \log 2}{- \log 3 + 12 \log 2 -  175/36}  
= 4.43206 \ldots < 6, \end{aligned}
\end{equation}
which is the desired contradiction. We now turn to the irrationality measure.
Of the~$m=6$ functions, only one 
 lies in~$\Q \llbracket x \rrbracket$. 
Hence~$\gamma = 1/6$. With~$\rho_2=2^{-12}$,
we get an irrationality measure for any $\kappa$
satisfying (cf. \cref{fullforce})
\begin{equation} \label{weakermeasure}
\begin{aligned}
6 
& >  \frac{2.13322 \ldots + 12 \log 2}{- \log 3 + 12 \log 2 -  175/36 -12   \log(2) \frac{2(1-\gamma)\kappa - (1-\gamma)^2}{\kappa^2}}, \end{aligned}
\end{equation}
or $\kappa \sim 22.0724$. This can be improved with a more involved choice of~$\Omega$. 
For example, consider the alternate choice
 $\varphi(z) = x(\psi(z))$ with the lune contour~\cite[(A.1.1)]{L2chi}
 $\psi: D(0,1) \iso  \Omegalune \subset \overline{D(0,2/3)}$ given explicitly by:
 $$
 \psi(z) = \frac{2}{3} h\left(-z,\frac{2}{5}\right) =\frac{2}{3} \cdot \frac{(5^2+2^2)}{2(5^2-2^2)} \left(1 + z - \sqrt{1 - \frac{2(5^4 - 6 \cdot 5^2 2^2 + 2^4)}{(5^2+2^2)^2} z + z^2} \right) =  \frac{14}{29} \cdot z + \ldots$$
This leads to the estimate $\kappa < 19.7439 < 20$, where the analogue of~\cref{weakermeasure} is
 $$
6  < \frac{3.92881 \ldots + 12 \log 2}{(\log 14 - \log 29)  + 12 \log 2 -  175/36 -12   \log(2) \frac{2(1-\gamma)\kappa - (1-\gamma)^2}{\kappa^2}}.
$$
We  bound the value of the Bost--Charles integral above by the rearrangement
 integral~\cite[Prop~8.1.13]{L2chi} which is easier to compute rigorously.
\end{proof}

\begin{remark} \label{max} As far as the qualitative irrationality in~\cref{old2adic} is concerned, the proof of $\zeta_2(5) \notin \Q$ neither requires the full force
of the holonomy bounds in~\cite{L2chi}, nor even of their precursors in~\cite{UDC}. 
Indeed, the proof goes through   (with the same $\varphi_{\infty}$ corresponding
to~$\Omegacirc$)
 even after replacing the Bost--Charles
integral term in the numerator by $ \sup_{\T} \log|\varphi|$, 
and~$\tau(\bb)$ by~$\tau = 5$.  
The corresponding
computation for~\cref{fullforce} is: 
\begin{equation} \label{brutforce}
6 \le m   \leq \frac{  \max \log|\varphi(z)|  + \sum_{p} \log R_p }{\log |\varphi'(0)| + \sum_{p} \log R_p  - \tau}
=  \frac{\log \left((1/5) \prod_{n=1}^{\infty} (1 + (1/5)^n)^{24}\right) + 12 \log 2}{- \log 3 + 12 \log 2 -  5}  
= 5.52667 \ldots < 6.
\end{equation}
 We  proved
such bounds (together with the application to~$\zeta_2(5)$)  in 2020
(see~\cite[\S 15.7]{L2chi}), but we put
the argument aside in order to work towards the main theorems of~\cite{L2chi}; we apologize
for the delay!
Quite recently, 
Lai, Sprang, and Zudilin~\cite{LSZ} have found a completely new proof 
of the irrationality of~$\zeta_2(5)$, more in the spirit of Ap\'{e}ry's original
argument, by working  with a differential equation with better convergence properties. The corresponding local system in their case appears to have a genuinely different
geometric origin, coming from a moduli
space of Calabi--Yau $3$-folds rather than a symmetric power of the Picard--Fuchs equation
of a modular curve.
Neither of these results, however, have any direct bearing on the (ir)rationality
of the classical~$\zeta(5)$.\footnote{
One may wonder
why one can prove the irrationality of~$\zeta_2(5) \in \Q_2$ but not the irrationality of~$\zeta(5) \in \R$.
We give a heuristic explanation why the first problem is easier. Let~$\MT(\Z)$ denote the Tannakian category of mixed Tate motives over~$\Z$.
Associated to~$\MT(\Z)$ is a~$\Q$-algebra of real (fixed by complex conjugation) motivic periods~$\PP$~\cite{Brown}.
The comparison theorem between Betti cohomology and de Rham cohomology (over~$\C$) induces a period map \(\PP \rightarrow \R\),
and Grothendieck's period conjecture implies that this map is injective~\cite{MR2588609}.
A consequence of this conjecture is that the~$\Q$-algebra
generated by~$\zeta(2)$ and the odd values~$\zeta(2k+1)$ for~$k \ge 1$
is free on these generators; equivalently, that these values are all
transcendental and moreover algebraically independent.
What is perhaps surprising at first is that the~$p$-adic analogue of this story
behaves somewhat differently~\cite{Besser,Furusho}.
One of the most basic manifestations of this difference is
that the~$p$-adic comparison theorem naturally places the~$p$-adic
analogue of~$2 \pi i$ as an element in~$\BdR$ which maps to zero in~$\mathbf{C}_p$.
In particular, the only sensible definition of~$\zeta_p(2)$ 
returns~$\zeta_p(2) = 0$ (\cite[Rem~3.7]{YamashitaOne}). One nonetheless conjectures~\cite{Yamashita}
that the~$\Q$-algebra~$\Q(\zeta_p(3),\zeta_p(5),\zeta_p(7),\ldots)$
is free (on the obvious generators).
Let us consider
how the vanishing of~$\zeta_p(2)$ affects approaches to irrationality.
The first step in any Ap\'{e}ry type scheme is to produce a motivic local system
where the corresponding~$L$-value arises as a period.
 One issue, when  trying to construct such local systems
 is that various periods get ``mixed'' together and it is hard to separate
 the period one is particularly interested in.
 (One interesting discussion of these issues 
 from a geometric point of view 
 can be found in~\cite{BrownZudilin}.)
The point is now that, $p$-adically, the vanishing of~$\zeta_p(2)$
makes it easier to realize the odd zeta values  on their own.
Relevantly for us, the construction in~\cite{Calegari}
directly generalizes to produce an ODE over~$\mathbf{P}^1 \setminus \{0,-1/2^6,\infty\}$
associated to~$\zeta_2(2k+1)$ for any~$k$; we know of no simple analogous construction
for~$\zeta(2k+1)\in \R$. 
As another example; the ODE for the~$2$-adic Catalan's
constant in~\cite{Calegari} has the form~$\LL F = 0$ over~$\mathbf{P}^1 \setminus \{0,-1/2^4,\infty\}$ where
$$\LL = x(1+16 x)^2   \frac{d^2}{dx^2} + (1 + 16 x)^2  \frac{d}{dx}  - 4.$$
Up to scaling this is a hypergeometric equation which one can solve explicitly, namely, with:
\begin{equation}
\label{catalan}
A(x) =  (1 + 16 x)^{1/2} \cdot 
 \pFq{2}{1}{1/2,1/2}{1}{-16 x},\qquad
 C(x) = 
(1 + 16 x)^{1/2}  \cdot 
 \pFq{2}{1}{1/2,1/2}{1}{1 + 16 x}.
\end{equation}
The holomorphic solution~$B(x) \in \Q \llbracket x \rrbracket$ to the non-homogenous equation~$\LL F = 1+16 x$
with~$B(0) = 0$ and~$B'(0) = 1$ is of denominator type~$\tau = [1,2,\ldots,n]^2$.
Another solution to this equation~$\LL F=1+16x$ is given explicitly by
$$D(x) =  -\frac{1+16x}{4} \cdot \pFq{3}{2}{1,1,1}{3/2,3/2}{1+16 x}.$$
Over the real numbers, $D(x)$ is singular at~$x=0$, and we have the equality
  \begin{equation}
\label{real}
D(x) = B(x) - \frac{G}{2} \cdot A(x) - \frac{\pi^2}{16} \cdot C(x),
\end{equation}
where~$G \in \R$ is now the usual Catalan's constant. 
In contrast, $2$-adically, 
we see that~$D(x)$ converges for~$|1+16x|_2 < 2^{4}$,
or for~$|x|_2 < 2^{8}$, and so it is certainly regular at~$x=0$. Now the $2$-adic avatar of~\cref{real} simplifies to the formula
\begin{equation}
\label{2adic}
D(x) = B(x) - \frac{G_2}{2} \cdot A(x),
\end{equation}
in which~$G_2 \in \Q_2$ is the~$2$-adic Catalan constant. Thus~\cref{2adic}
 realizes~$G_2$ as an Ap\'{e}ry limit.
The key point is now that~\cref{real} is 
 precisely the ``same'' equation as~\cref{2adic} as long as we
 interpret the~$2$-adic meaning of~$(2 \pi i)^2$ to be zero, but Ap\'{e}ry's
 method only allows one to deduce that~$G_2 \in \Q_2 \setminus \Q$, but not that~$G \in \R \setminus \Q$.
That said, while this is an argument to explain why the $p$-adic zeta
values of small weight should be easier to study than their Archimedean counterparts,
there is also the contrasting remark that we can't as yet even rule
out the possibility that~$\zeta_p(3)=0$ for all sufficiently large~$p$,
let alone prove that all those values are irrational.
}
\end{remark}

\section*{Acknowledgments.}
As discussed in~\cite{L2chi}, our work on arithmetic holonomy bounds owes an obvious debt to the mathematics of the Chudnovskys~\cite{ChudnovskyAlg,ChudnovskyThueSiegel}, 
 Andr\'e \cite{AndreG}, Bost~\cite{BostFoliations}, and Bost--Charles \cite{BostCharles}.

\bibliographystyle{siamplain}
\bibliography{ICM}
\end{document}